\theoremstyle{definition}
\newtheorem*{df*}{Definition}
\theoremstyle{remark}
\newtheorem*{rem*}{Remark}
\numberwithin{equation}{section}
\newtheorem{theorem}{Theorem}
\newtheorem{lemma}{Lemma}
\newtheorem{corollary}{Corollary}
\newtheorem{definition}{Definition}
\newtheorem{remark}{Remark}
\newtheorem*{question}{Question}
\newcommand{\La}{\langle}
\newcommand{\Ra}{\rangle}
\newcommand{\s}{\mathbf}
\begin{document}

\title[Bellman functions and Brascamp--Lieb inequality]{Hessian of  Bellman functions and uniqueness of Brascamp--Lieb inequality}

\author{Paata Ivanisvili}
\thanks{PI is partially supported by the Hausdorff Institute for Mathematics, Bonn, Germany}
\address{Department of Mathematics,  Michigan State University, East
Lansing, MI 48824, USA}
\email{ivanisvi@math.msu.edu}

\author{Alexander Volberg}
\thanks{AV is partially supported by the NSF grant DMS-1265549 and by the Hausdorff Institute for Mathematics, Bonn, Germany}
\address{Department of Mathematics,  Michigan State University, East
Lansing, MI 48824, USA}
\email{volberg@math.msu.edu}

\makeatletter
\@namedef{subjclassname@2010}{
  \textup{2010} Mathematics Subject Classification}
\makeatother

\subjclass[2010]{42B20, 42B35, 47A30}



%
%

\keywords{Bellman function, Brascamp--Lieb inequality,  homogeneous, concave, separately concave}

  \begin{abstract}
Under some assumptions on the vectors $a_{1},..,a_{n}
\in\mathbb{R}^{k}$ and the function  $B : \mathbb{R}^{n} \to \mathbb{R}$
we find the  sharp estimate of the expression $\int_{\mathbb{R}^{k}}
B(u_{1}(a_{1}\cdot x),\dots, u_{n}(a_{n}\cdot x))dx$ in terms of
$\int_{\mathbb{R}}u_{j}(y)dy,  j=1,\dots,n.$ In some particular case  we
will show that these assumptions on $B$ imply that there is only one
Brascamp--Lieb inequality.
\end{abstract}

\date{}
\maketitle

\section{Introduction}

\setcounter{equation}{0}
\setcounter{theorem}{0}


\subsection{Brascamp--Lieb inequality}
The classical Young's inequality for convolutions  on the real line asserts that for any  $f \in L^{p}(\mathbb{R})$ and $g \in  L^{q}(\mathbb{R})$ where $p,q \geq 1$,  we have  an inequality 

\begin{align}\label{youngi}
\|f*g\|_{r} \leq  \| f\|_{p}\|g\|_{q}
\end{align}
if and only if 
\begin{align}\label{youngc}
\frac{1}{p}+\frac{1}{q}=1+\frac{1}{r}.
\end{align}
 In what follows $f*g$ denotes convolution i.e. $f*g (y) = \int_{\mathbb{R}}f(x)g(y-x) dx$. 
The necessity of  (\ref{youngc}) follows immediately: by stretching the functions $f$ and $g$  as  $f_{\lambda}(x)=\lambda^{1/p} f(\lambda x) $, $g_{\lambda}(x)=\lambda^{1/q} g(\lambda x) $ corresponding norms do not change. Since $\|f_{\lambda}*g_{\lambda}\|_{r}=\lambda^{\frac{1}{p}+\frac{1}{q}-1-\frac{1}{r}}\|f * g\|_{r}$ we obtain (\ref{youngc}). Beckner (see \cite{beckner}) found the sharp constant $C=C(p,q,r)$ of the inequality  $\|f*g\|_{r} \leq  C \| f\|_{p}\|g\|_{q}$. At the same time (see \cite{brascamp--lieb}) Brascamp and Lieb derived more general inequality, namely, let $a_{1},..,a_{n}$ be the vectors of $\mathbb{R}^{k}$ where $1\leq k \leq n$, let $u_{j} \in L^{1}(\mathbb{R})$ be nonnegative functions,  where $1\leq p_{j} \leq \infty$ and $\sum_{j=1}^{n}\frac{1}{p_{j}}=k$, then we have a sharp inequality 
\begin{align}\label{bl}
\int_{\mathbb{R}^{k}}\prod_{j=1}^{n}u^{1/p_{j}}_{j}(\langle a_{j}, x \rangle) dx \leq D(p_{1},\ldots,p_{n}) \prod_{j=1}^{n}\left( \int_{\mathbb{R}}u_{j}(x)dx \right)^{1/p_{j}},
\end{align}
where $\langle\cdot, \cdot\rangle$ denotes scalar product in Euclidian space, 
 \begin{align}\label{cbl}
 D(p_{1},\ldots, p_{n})  = \sup_{b_{1},..,b_{n} > 0}\int_{\mathbb{R}^{k}}\prod_{j=1}^{n}g^{1/p_{j}}_{j}(\langle a_{j}, x\rangle) dx
 \end{align}
 and  $g_{j}(x) = b_j^{1/2} e^{-\pi x^{2}b_{j}}$. In other words, supremum in (\ref{bl}) is achieved by centered, normalized (i.e. $\|g_{j}\|_{1}=1$) gaussian functions.
Usually inequality (\ref{bl})  is written as follows:
\begin{align*}
\int_{\mathbb{R}^{k}}\prod_{j=1}^{n}w_{j}(\langle a_{j}, x \rangle) dx \leq D(p_{1},\ldots,p_{n}) \prod_{j=1}^{n}\|w_{j}\|_{p_{j}}.
\end{align*}
for $u_{j} \in L^{p_{j}}(\mathbb{R})$. Surely the above inequality becomes the  same as (\ref{bl}) after introducing the functions $w_{j}(x)=u_{j}^{1/p_{j}}(y)$. 

It is clear that Brascamp--Lieb inequality (\ref{bl}) implies sharp Young's inequality for convolutions. Indeed, just take $n=3, k=2, a_{1}=(1,0), a_{2}=(1,-1), a_{3}=(0,1)$ and use duality argument. 

  The next natural question which arose  was the following one: what conditions should  the vectors $a_{j}$ and the numbers $p_{j}$ satisfy in order for the constant $D(p_{1},\ldots,,p_{n})$ to be finite. It turns out that the answer has simple geometrical interpretation which was  first found by Barthe (see \cite{barthe}): we consider all different  $k$-tuples of vectors $(a_{j_{1}},..,a_{j_{k}})$ such that they create basis in $\mathbb{R}^{k}$. All we need from these tuples are the numbers $j_{1},\ldots ,j_{k}$.  Each $k$-tuple defines a unique vector $v \in \mathbb{R}^{n}$ with entries 0 and 1 so that $j_{i}$-th component is 1 ($i=1,\ldots ,k$) and the rests are zeros. Finally we take convex hull of the vectors $v$ and denote it by $K$. The constant $D(p_{1},\dots, p_{n})$ is finite if and only if $\left(\frac{1}{p_{1}},\ldots, \frac{1}{p_{n}}\right) \in K$. In other words, in order to make the set $K$ large we want the vectors $a_{1},\ldots, a_{n}$ to be {\em more linearly independent}. Later the proof of the Brascamp--Lieb inequality (\ref{bl}) was simplified (see \cite{CLM}) by heat flow method. The idea of the method is quite similar to {\em Bellman function technique} which we are going to discuss in the current article. The same idea was used in \cite{BCCT} in order to derive {\em general rank Brascamp--Lieb inequality} (see also \cite{BCCT2}): let $B_{j} : \mathbb{R}^{k}\to \mathbb{R}^{k_{j}}$ be a surjective linear maps,  $u_{j} : \mathbb{R}^{k_{j}} \to \mathbb{R}_{+}$, $k_{j}, k \in \mathbb{N}$, and $p_{j} \geq 1$ are such that $\sum_{j=1}^{n} \frac{k_{j}}{p_{j}}=k$ then we have a sharp inequality 
\begin{align*}
\int_{\mathbb{R}^{k}}u^{1/p_{1}}_{1}(B_{1}x)\cdots u^{1/p_{n}}_{n}(B_{n}x)dx \leq C\left(\int_{\mathbb{R}^{k_{1}}}u_{1}\right)^{1/p_{1}}\cdots \left(\int_{\mathbb{R}^{k_{n}}}u_{n} \right)^{1/p_{n}}
\end{align*}
where 
\begin{align}\label{cbl2}
C = \sup_{A_{1},\ldots, A_{n} >0} \int_{\mathbb{R}^{k}}G^{1/p_{1}}_{1}(B_{1}x)\cdots G^{1/p_{n}}_{n}(B_{n}x)dx
\end{align}
and $G_{j}(y) = e^{-\pi \langle A_{j}y,y\rangle} (\det{A_{j}})^{1/2}$. Supremum in (\ref{cbl2}) is taken over all positive definite $k_{j}\times k_{j}$ matrices $A_{j}$. One of the main result obtained  in \cite{BCCT} describes finiteness of the number $C$, namely, $C$ is finite if and only if 
\begin{align*}
\dim(V) \leq \sum_{j=1}^{n} \frac{\dim{(B_{j}V)}}{p_{j}} \quad \text{for all subspaces} \quad V \subset \mathbb{R}^{k}.
\end{align*}
After this  result the original inequality (\ref{bl}) got a name {\em rank 1 Brascamp--Lieb inequality}. If $k=1$ the  inequality (\ref{bl}) becomes usual multilinear H\"older's inequality 
\begin{align}\label{holder}
\int_{\mathbb{R}} \prod_{j=1}^{n}u^{1/p_{j}}_{j}(x) dx \leq \prod_{j=1}^{n} \left( \int_{\mathbb{R}}u_{j} \right)^{1/p_{j}}  \Longleftrightarrow\;\;\; \sum_{j}\frac{1}{p_{j}}=1.
\end{align}

From the {\em Bellman function point of view}   the multilinear H\"older's inequality holds because the 
following function 
\begin{align}\label{hol}
B(x_{1},\ldots, x_{n})=x_{1}^{1/p_{1}}\cdots x_{n}^{1/p_{n}}
\end{align}
 is concave in the domain $x_{j}\geq 0$ for $\sum_{j=1}^{n}\frac{1}{p_{j}}\leq 1$ (we assume that $p_{j} >0$). 
 
 This Bellman function point of view asks us to look for the description of functions $B$ such that 
 \begin{align}
 \label{postanovka}
\!\!\! \int_{\mathbb{R}^k}B(u_{1}(\langle a_1, x\rangle),\ldots, u_{n}(\La a_n, x\Ra)dx \,\,\text{is estimated in terms of}\,\,\left\{\int_{\mathbb{R}}u_{i}(x)dx\right\}_{i=1}^n\ . 
 \end{align}
 Function $B(x_{1},\ldots, x_{n})=x_{1}^{1/p_{1}}\cdots x_{n}^{1/p_{n}}$, $\sum_{j=1}^{n}\frac{1}{p_{j}}= 1$, is an example of such a function for $k=1$. But for $k=1$ one can easily get the full description of ``Bellman functions" that give inequality \eqref{jensen} below. 
 
 The equality $\sum_{j=1}^{n}\frac{1}{p_{j}}= 1$ was needed because the function $B(x_{1},\ldots,x_{n})$ has to be homogeneous of degree 1 i.e., $B(\lambda \s{x})=\lambda B(\s{x})$. This allows us to write integral over the real line. Indeed, if  the nonnegative  functions $u_{j}$ are integrable then Jensen's inequality implies 
$$
\frac{1}{|I|}\int_{I}B(u_{1},\ldots, u_{n})dx \leq B\left(\frac{1}{|I|}\int_{I}u_{1}dx,\ldots,\frac{1}{|I|}\int_{I}u_{n}dx \right)
$$
where $I$ is any subinterval of the real line. 
Since the function $B$  is 1-homogeneous we can rewrite the above inequality as follows 

$$
\int_{I}B(u_{1},\ldots, u_{n})dx \leq B\left(\int_{I}u_{1}dx,\ldots,\int_{I}u_{n}dx \right).
$$
Take $I=[-R,R]$ and send $R$ to infinity. $B$ is continuos, so that 
$$
B\left(\int_{I}u_{1}(x)dx,\ldots,\int_{I}u_{n}(x)dx \right) \to B\left(\int_{\mathbb{R}}u_{1}dx,\ldots,\int_{\mathbb{R}}u_{n}dx \right)\ .
$$ Continuity of $B$ and monotone convergence theorem implies that 
\begin{align}
\label{jensen}
\int_{\mathbb{R}}B(u_{1}(x),\ldots, u_{n}(x))dx \leq B\left(\int_{\mathbb{R}}u_{1}dx,\ldots,\int_{\mathbb{R}}u_{n}dx \right)
\end{align}

It is worth to formulate the following lemma. Set  $\mathbb{R}^{n}_{+} = \{(x_{1},\ldots,x_{n}) : x_{j} \geq 0\}.$ 
\begin{lemma}\label{jen}
Let $u_{j}$ be nonnegative integrable functions $j=1,\ldots, n$ on the real line. If $B$  is 1-homogeneous concave function on $\mathbb{R}^{n}_{+}$,  then (\ref{jensen}) holds. Equality is achieved in (\ref{jensen}) if $(u_{1}, \ldots, u_{n})$ are all proportional.\end{lemma}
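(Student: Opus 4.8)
The plan is to obtain \eqref{jensen} as the $R\to\infty$ limit of the finite-interval Jensen inequality, exactly as sketched in the discussion preceding the statement, and to verify separately that equality holds when the $u_j$ are proportional. First I would fix $R>0$ and set $I=[-R,R]$. Since $B$ is concave on $\mathbb{R}^n_+$ and each $u_j$ is nonnegative and integrable, the vector-valued Jensen inequality applied to the probability measure $\tfrac{1}{|I|}\,dx$ on $I$ gives
\begin{align*}
\frac{1}{|I|}\int_{I}B(u_{1}(x),\ldots,u_{n}(x))\,dx \leq B\left(\frac{1}{|I|}\int_{I}u_{1}(x)\,dx,\ldots,\frac{1}{|I|}\int_{I}u_{n}(x)\,dx\right).
\end{align*}
Here one should note that the left side makes sense (possibly $-\infty$, but in fact finite since $B$ is bounded above on bounded subsets of $\mathbb{R}^n_+$ by concavity together with $1$-homogeneity, or at worst the inequality is vacuous), and the average $\tfrac1{|I|}\int_I u_j$ is finite, so the right side is a genuine real number.

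Next I would use $1$-homogeneity of $B$ to clear the normalizing factor $1/|I|$: multiplying both sides by $|I|$ and using $B(\lambda \mathbf{x})=\lambda B(\mathbf{x})$ with $\lambda=|I|$ on the right gives
\begin{align*}
\int_{I}B(u_{1}(x),\ldots,u_{n}(x))\,dx \leq B\left(\int_{I}u_{1}(x)\,dx,\ldots,\int_{I}u_{n}(x)\,dx\right).
\end{align*}
Now let $R\to\infty$. On the right, each $\int_{-R}^{R}u_j\to\int_{\mathbb{R}}u_j$ by monotone convergence (the $u_j\geq0$), and $B$ is continuous on $\mathbb{R}^n_+$ (concave finite functions are continuous on the interior, and one checks continuity up to the boundary here via $1$-homogeneity and monotonicity of $B$ in each variable, which follows from concavity plus nonnegativity considerations), so the right side converges to $B\!\left(\int_{\mathbb{R}}u_1,\ldots,\int_{\mathbb{R}}u_n\right)$. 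On the left, $\int_{-R}^{R}B(u_1,\ldots,u_n)\,dx$ is monotone in $R$ once we know the integrand is nonnegative; since $B(\mathbf{0})=0$ by $1$-homogeneity and $B\geq 0$ on $\mathbb{R}^n_+$ (again a consequence of concavity and $1$-homogeneity: $B(\mathbf{x})=\tfrac12 B(2\mathbf{x})+\tfrac12 B(\mathbf{0})\le$ arguments, or directly the function is a concave $1$-homogeneous function vanishing at the origin, hence nonnegative on the cone where it is finite), monotone convergence applies and the left side tends to $\int_{\mathbb{R}}B(u_1,\ldots,u_n)\,dx$. Passing to the limit in the inequality yields \eqref{jensen}.

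Finally, for the equality case suppose $u_j(x)=c_j\,\varphi(x)$ for constants $c_j\geq0$ and a single nonnegative integrable $\varphi$. Then $B(u_1(x),\ldots,u_n(x))=B(c_1\varphi(x),\ldots,c_n\varphi(x))=\varphi(x)\,B(c_1,\ldots,c_n)$ by $1$-homogeneity, so $\int_{\mathbb{R}}B(u_1,\ldots,u_n)\,dx = B(c_1,\ldots,c_n)\int_{\mathbb{R}}\varphi$; on the other hand $\int_{\mathbb{R}}u_j=c_j\int_{\mathbb{R}}\varphi$, and $1$-homogeneity again gives $B\!\left(\int u_1,\ldots,\int u_n\right)=\left(\int_{\mathbb{R}}\varphi\right)B(c_1,\ldots,c_n)$, so both sides agree. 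The only mildly delicate point in the whole argument is the justification that $B$ is continuous up to the boundary of $\mathbb{R}^n_+$ and that the integrand $B(u_1,\ldots,u_n)$ is nonnegative (so that monotone convergence, rather than a more careful dominated-convergence argument, applies); I expect that to be the main—but still routine—obstacle, handled by the $1$-homogeneity and the elementary fact that a finite concave $1$-homogeneous function on a convex cone vanishing at the vertex is nonnegative.
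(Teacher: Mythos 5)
There is a genuine gap. Your passage to the limit on the left-hand side rests on the claim that a finite, concave, $1$-homogeneous function on the cone $\mathbb{R}^n_+$ vanishing at the origin must be nonnegative, so that monotone convergence applies to $\int_{-R}^{R}B(u_1,\ldots,u_n)\,dx$. That claim is false: $B(x_1,\ldots,x_n)=-x_1$ is linear (hence concave), $1$-homogeneous, vanishes at $\mathbf{0}$, and is negative on the interior of the cone. (Your sketched argument $B(\mathbf{x})=B(\tfrac12\cdot 2\mathbf{x}+\tfrac12\cdot\mathbf{0})\ge\tfrac12 B(2\mathbf{x})+\tfrac12 B(\mathbf{0})$ only returns the tautology $B(\mathbf{x})\ge B(\mathbf{x})$.) Since the lemma does not assume $B\ge 0$, the integrand may change sign and monotone convergence is not available; the step where the left side of the finite-interval inequality converges to $\int_{\mathbb{R}}B(u_1,\ldots,u_n)\,dx$ is therefore unjustified as written. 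This is precisely the point the paper's proof is devoted to: it produces an explicit summable majorant. Taking a subgradient $v$ of $B$ at an interior point $\mathbf{x}_0$ and testing the subgradient inequality along the ray $\lambda\mathbf{x}_0$, homogeneity forces $B(\mathbf{x}_0)=\langle v,\mathbf{x}_0\rangle$ and hence $B(\mathbf{x})\le\langle v,\mathbf{x}\rangle$ for all $\mathbf{x}\in\mathbb{R}^n_+$; on the other side, concavity plus homogeneity give $B(\mathbf{x})\ge\sum_j x_j B(e_j)$. Together these yield
\begin{equation*}
|B(\mathbf{x})|\le\max\Bigl\{\sum_{j=1}^{n}x_j|B(e_j)|,\ \sum_{j=1}^{n}x_j|v_j|\Bigr\},
\end{equation*}
and substituting $x_j=u_j(x)$ produces an integrable dominating function, so Lebesgue's dominated convergence theorem finishes the limit. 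Your proof would be repaired by replacing the monotone-convergence step with this (or an equivalent) domination argument.

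The remainder of your argument is fine and matches the paper's route: the finite-interval Jensen inequality, clearing the factor $1/|I|$ by homogeneity, the monotone convergence of $\int_{-R}^{R}u_j$ on the right (these integrands really are nonnegative), and the verification of equality for proportional $u_j$ via $B(c_1\varphi(x),\ldots,c_n\varphi(x))=\varphi(x)B(c_1,\ldots,c_n)$ are all correct. One further small caution: continuity of $B$ up to the boundary of $\mathbb{R}^n_+$ does not follow automatically from finiteness and concavity on the closed cone; it is part of the standing hypotheses in the surrounding discussion (see the paper's Lemma characterizing such $B$), so you should invoke it rather than derive it.
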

\begin{proof}
As we just saw, the proof  follows from showing that $\int_{I}B(u_{1},\ldots, u_{n}) \to \int_{\mathbb{R}}B(u_{1},\ldots, u_{n}).$ We are going to find now a summable amjorant. Take any point $\s{x}_{0}$ from the interior of $\mathbb{R}^{n}_{+}$. Consider any subgradient $v=(v_{1},\ldots, v_{n})$ at point $\s{x}_{0}$ i.e. $B(\s{x}) \leq \langle v, \s{x}-\s{x}_{0}\rangle +B(\s{x}_{0})$. Take $\s{x} = \lambda \s{x}_{0}$ and use the homogeneity of $B$. Thus we obtain $(\lambda -1 )B(\s{x}_{0})\leq (\lambda -1)\langle v, \s{x}_{0}\rangle$ for any $\lambda \geq 0$. This  means that $B(\s{x}_{0})=\langle v, \s{x}_{0}\rangle$ and, therefore, $B(\s{x}) \leq \langle v, \s{x} \rangle$. On the other hand let $e_{j}=(0,\ldots, 1,\ldots, 0)$ be a basis vectors ($j$-th component entry is $1$ and the rests are zero). Consider any point $\s{x} = (x_{1},.\ldots,x_{n}) \in \mathbb{R}^{n}_{+}$. Concavity  and homogeneity of $B$ implies that $B(x) \geq \sum_{j=1}^{n}x_{j}B(e_{j}).$ So we obtain the majorant 
$$
|B(\s{x})|\leq \max \left\{ \sum_{j=1}^{n} x_{j} |B(e_{j})|,  \sum_{j=1}^{n}x_{j} |v_{j}| \right\} \quad\text{for any}\quad \s{x} \in \mathbb{R}^{n}_{+.}
$$
Plugging $u_j$ for $x_j$ we see that the use of Lebesgue's dominated convergence theorem is justified.
\end{proof}

The above Lemma says that homogeneity and concavity of the function implies the inequality \eqref{jensen}. The converse is also true.

 Now the following question becomes quite natural: 
\begin{question}
Assume $a_{1},\ldots, a_{n} \in \mathbb{R}^{k},$ $B$ is continuous function defined on $\mathbb{R}^{n}_{+}$ and $u_{j}$, $j=1,\ldots, n$ are nonnegative integrable functions. What is the sharp estimate of the expression 
\begin{align}
\label{gbl}
\int_{\mathbb{R}^{k}}B(u_{1}(\langle a_{1}, \s{x}\rangle), \ldots, u_{n}(\langle a_{n}, \s{x}\rangle))d\s{x}
\end{align}
in terms of $\int_{\mathbb{R}}u_{j}$?
\end{question}

In other words, along with Young's functions 
$$
B(x_1,\dots, x_n)= x_1^{1/p_1}\cdot\dots\cdot x_n^{1/p_n}, \,\,\,\,\sum\frac1{p_j} =k,
$$
 what can be other Brascamp--Lieb Bellman functions that would give us sharp estimates of \eqref{gbl}?

We give partial answer on this question. It turns out that if one requires function $B$ is homogeneous of degree $k$ and in addition it satisfies some  mild assumptions (smoothness and exponential integrality given below), then we can find the sharp estimate of the expression (\ref{gbl}) in terms of $\int_{\mathbb{R}}u_{j}$, if $B$ satisfies an interesting concavity condition. In a trivial case $k=1$ our theorem gives us of course inequality (\ref{jensen}).  

In the trivial case $k=1$ we already saw  that the interesting concavity condition mentioned above  is precisely the usual concavity of $B$.
In another trivial case $k=n$, the interesting concavity condition mentioned above becomes ``separate concavity" of $B$ in each of its variables.

For $1<k<n$ our concavity condition is, in fact, some compromise between these two concavities.

\bigskip

As we will see  $k=n-1$ and $k=n$ this concavity condition (plus $k$-homogeneity and mild regularity) imply that Brascamp--Lieb Bellman functions $B$ can be only the standard ones: $B(x_1,\dots, x_n)= x_1^{1/p_1}\cdot\dots\cdot x_n^{1/p_n}, \sum\frac1{p_j} =k$.

\bigskip

Before we start formulating our results, we will explain that there are {\em many} 1-homogeneous concave functions  $B$ on $\mathbb{R}^{n}_{+}$.
\begin{lemma}
Function $B$ is continuous, concave and homogeneous of degree 1 on $\mathbb{R}^{n}_{+}$ if and only if there exists continuous, concave function $\tilde{B}(\s{y})$ on $\mathbb{R}^{n-1}_{+}$ such that $\lim_{\lambda \to \infty} \frac{1}{\lambda} \tilde{B}\left(\lambda \s{y}\right)$ exists, it is continuous with respect to $\s{y}$ and $B(x_{1},\ldots, x_{n}) = x_{1} \tilde{B}\left(\frac{x_{2}}{x_{1}}, \ldots, \frac{x_{n}}{x_{1}} \right)$
\end{lemma}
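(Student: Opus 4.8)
The plan is to establish the equivalence in two directions, the ``if'' direction being essentially immediate and the ``only if'' direction being the substance. For the ``if'' direction, suppose $\tilde B$ is continuous and concave on $\mathbb{R}^{n-1}_{+}$ with $\tilde B_{\infty}(\s{y}):=\lim_{\lambda\to\infty}\frac1\lambda\tilde B(\lambda\s{y})$ existing and continuous. One checks directly that $B(x_1,\ldots,x_n)=x_1\tilde B(x_2/x_1,\ldots,x_n/x_1)$ for $x_1>0$, extended by $B(0,x_2,\ldots,x_n):=\tilde B_\infty(x_2,\ldots,x_n)$ on the face $x_1=0$, is $1$-homogeneous: for $t>0$, $B(tx_1,\ldots,tx_n)=tx_1\tilde B(x_2/x_1,\ldots,x_n/x_1)=tB(\s{x})$, and the face case follows from homogeneity of $\tilde B_\infty$. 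Concavity on the open half-space $\{x_1>0\}$: this is the classical fact that the ``perspective'' $(x_1,\s{x}')\mapsto x_1\tilde B(\s{x}'/x_1)$ of a concave function is concave (verify the defining inequality by writing the convex combination of two points $(x_1,\s{x}'),(y_1,\s{y}')$, factoring out $tx_1+(1-t)y_1$, and applying concavity of $\tilde B$ to the appropriate convex combination of $\s{x}'/x_1$ and $\s{y}'/y_1$ with weights $tx_1/(tx_1+(1-t)y_1)$ and its complement). Continuity up to the boundary face $x_1=0$ is exactly what the hypothesis on $\tilde B_\infty$ buys us, and concavity extends to the closed orthant by continuity.

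For the ``only if'' direction, given $B$ continuous, concave, $1$-homogeneous on $\mathbb{R}^{n}_{+}$, define $\tilde B(\s{y}):=B(1,y_1,\ldots,y_{n-1})$ for $\s{y}\in\mathbb{R}^{n-1}_{+}$. Then $\tilde B$ is continuous (restriction of a continuous function) and concave (restriction of a concave function to the affine slice $\{x_1=1\}$). By $1$-homogeneity, for $x_1>0$ we recover $B(x_1,x_2,\ldots,x_n)=x_1 B(1,x_2/x_1,\ldots,x_n/x_1)=x_1\tilde B(x_2/x_1,\ldots,x_n/x_1)$, which is the claimed formula. It remains to check that $\frac1\lambda\tilde B(\lambda\s{y})$ converges as $\lambda\to\infty$ and that the limit is continuous in $\s{y}$. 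Here $\frac1\lambda\tilde B(\lambda\s{y})=\frac1\lambda B(1,\lambda\s{y})=B(1/\lambda,\s{y})$ by homogeneity, so the limit is $\lim_{\lambda\to\infty}B(1/\lambda,\s{y})=B(0,\s{y})$ by continuity of $B$ on $\mathbb{R}^{n}_{+}$; this limit exists and equals $B(0,y_1,\ldots,y_{n-1})$, which is continuous in $\s{y}$ again because $B$ is continuous on the closed orthant. So the required $\tilde B$ is produced.

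The main (and really the only) subtlety is the interplay between concavity and continuity at the boundary face $x_1=0$. A priori a concave function on the closed orthant need not be continuous on the boundary, but the statement builds the boundary behavior into the hypothesis via the existence and continuity of $\lim_{\lambda\to\infty}\frac1\lambda\tilde B(\lambda\s{y})$; conversely, when $B$ is assumed continuous on all of $\mathbb{R}^{n}_{+}$, that recession-type limit is automatically the boundary trace $B(0,\cdot)$ and inherits continuity for free. I would therefore take care in the write-up to state which orthant (open vs. closed) each concavity/continuity claim lives on, and to note that the perspective construction gives concavity on $\{x_1>0\}$ directly, with the closed-orthant concavity following by a density/limit argument once continuity up to $\{x_1=0\}$ is in hand. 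The homogeneity bookkeeping (degree $1$ throughout, $n-1$ variables for $\tilde B$) is routine but should be stated explicitly since the paper later works with degree-$k$ analogues.
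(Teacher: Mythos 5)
Your proposal is correct and follows essentially the same route as the paper: restriction to the slice $x_1=1$ for the ``only if'' direction, and the perspective-function concavity computation (factoring out $\alpha x_1+\beta y_1$ and applying concavity of $\tilde B$) for the ``if'' direction. Your explicit identification of the recession limit with the boundary trace $B(0,\cdot)$ is a small detail the paper leaves implicit, but it is the same argument.
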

\begin{proof}
 Indeed, if $B$ is continuous,  concave and homogeneous of degree 1 then  $B(x_{1},\ldots, x_{n}) = x_{1} B\left(1,\frac{x_{2}}{x_{1}}, \ldots, \frac{x_{n}}{x_{1}} \right)$ and the function  $\tilde{B} = B(1, y_{1}, \ldots, y_{n-1})$ is continuous and concave in $\mathbb{R}^{n-1}_{+}$. Moreover,  for each $\s{y} \in \mathbb{R}^{n-1}_{+}$,  $\lim_{\lambda \to \infty} \frac{1}{\lambda} \tilde{B}(\lambda \s{y})$ exists  and it is continuos with respect to $\s{y}$. 
 
Assume $\tilde{B}$ satisfies the conditions of the Lemma.  Consider  $B(x_{1},\ldots, x_{n}) = x_{1} \tilde{B}\left(\frac{x_{2}}{x_{1}}, \ldots, \frac{x_{n}}{x_{1}} \right)$.  It is clear that $B$ is continuous on $\mathbb{R}^{n}_{+}$ and it is homogeneous of degree 1. We will show that $B$ is concave in the interior of $\mathbb{R}^{n}_{+}$ and hence by continuity it will be concave on the closure as well. Let $\s{x}=(x_{1},\ldots, x_{n}), \s{y}=(y_{1},\ldots, y_{n}) \in \mathbb{R}^{n}$. Let $\bar{\s{x}}=(x_{2},\ldots, x_{n})$, $\bar{\s{y}}=(y_{2},\dots,y_{n})$ and $\alpha + \beta =1$  for $\alpha, \beta  \in [0,1]$. Then we have 
\begin{align*}
&B(\alpha \s{x} + \beta \s{y}) = (\alpha x_{1}+\beta y_{1}) B \left(1,\alpha \frac{\bar{\s{x}}}{\alpha x_{1}+\beta y_{1}} + \beta \frac{\bar{\s{y}}}{\alpha x_{1}+\beta y_{1}}\right)\geq (\alpha x_{1}+\beta y_{1}) \times  \\
&\left[\alpha B\left(1, \frac{\bar{\s{x}}}{\alpha x_{1}+\beta y_{1}} \right)+\beta B\left(1, \frac{\bar{\s{y}}}{\alpha x_{1}+\beta y_{1}} \right) \right]=\alpha B(\s{x})+\beta B(\s{y}).
\end{align*}
\end{proof}
\subsection{Bellman function in Brascamp--Lieb inequality}
In what follows we assume that  $B \in C(\mathbb{R}^{n}_{+})\cap C^{2}(\mathrm{int}(\mathbb{R}^{n}_{+}))$. In order for the quantity (\ref{gbl}) to be finite it is necessary to assume that $1\leq k \leq n$. Fix some vectors $a_{j}=(a_{j1},\ldots, a_{jn}) \in \mathbb{R}^{k}$ and $k\times k$ symmetric matrix $C$ such that $\langle Ca_{j}, a_{j}\rangle >0$ for $j=1,\ldots, n$.  Let $A$ be a $k \times n$ matrix  constructed by columns $a_{j}$ i.e. $A=(a_{1},\ldots, a_{n})$. We denote $A^{*}$ transpose matrix of $A$. 

Let $u_{j} : \mathbb{R} \to \mathbb{R}_{+}$ be  such that $0<\int_{\mathbb{R}} u_{j} <\infty$. Let $u_{j}(y,t)$ solves the heat equation $\frac{\partial u_{j}}{\partial t}-\langle Ca_{j},a_{j}\rangle \frac{\partial^{2} u_{j} }{\partial y^{2} }=0$ with the initial value $u_{j}(y,0)=u_{j}(y)$. Let $\mathrm{Hess}\, B(\s{y})$ denotes Hessian matrix of the function $B$ at point $\s{y}$. 

For two square  matrices of the same size $P=\{p_{ij}\}$ and $Q=\{q_{ij}\}$ , let $P\bullet Q=\{p_{ij}q_{ij}\}$ be Hadamard product. Denote by symbol\footnote{Sometimes we will write just $\s{u}$.}  
$$
\s{u}(x,t)=(u_{1}(\langle a_{1},x\rangle,t),\ldots, u_{n}(\langle a_{n},x\rangle,t))
$$
 and   denote 
 $$
 \s{u}'(x,t)=\left(u'_{1}(\langle a_{1},x\rangle,t),\ldots, u'_{n}(\langle a_{n},x\rangle,t)\right),
 $$
  where $u'_{j}(\langle a_{j},x\rangle ,t) = \left.\frac{\partial u_{j}(y,t)}{\partial y}\right|_{y=\langle a_{j},x \rangle}$.

\begin{lemma}
\label{quadratic}
For any $0< t < \infty$  and any $x \in \mathbb{R}^{k}$ we have 
\begin{align}
\label{bellprinc}
\left( \frac{\partial }{\partial t} -\sum_{i,j=1}^{k} c_{ij}\frac{\partial^{2}}{\partial x_{i}\partial x_{j}} \right) B(\s{u}(\s{x},t)) = -\langle (A^{*}CA)\bullet \mathrm{Hess}\,B(\s{u}(\s{x},t))\s{u}'(\s{x},t), \s{u}'(\s{x},t) \rangle.
\end{align}
\end{lemma}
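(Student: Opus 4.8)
The plan is to verify \eqref{bellprinc} by a direct chain-rule computation, carefully bookkeeping how the heat operator in $x\in\mathbb R^k$ interacts with the one-dimensional heat flow carried by each $u_j$. First I would fix notation: write $y_j=\langle a_j,x\rangle$, so that $u_j(\langle a_j,x\rangle,t)=u_j(y_j,t)$, and record the two elementary chain-rule identities $\partial_{x_i}\big(u_j(y_j,t)\big)=a_{ji}\,u_j'(y_j,t)$ and $\partial_{x_i}\partial_{x_\ell}\big(u_j(y_j,t)\big)=a_{ji}a_{j\ell}\,u_j''(y_j,t)$, where $'$ denotes $\partial_y$. These are the only places the matrix $A$ enters.

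Next I would compute the two sides separately. For the time derivative, $\partial_t B(\s u)=\sum_{j=1}^n (\partial_j B)(\s u)\,\partial_t u_j(y_j,t)$, and by the heat equation satisfied by each $u_j$ this equals $\sum_j (\partial_j B)(\s u)\,\langle Ca_j,a_j\rangle\,u_j''(y_j,t)$. For the spatial part, I apply the chain rule twice:
\begin{align*}
\partial_{x_i}\partial_{x_\ell} B(\s u)
&= \sum_{p,q=1}^n (\partial_p\partial_q B)(\s u)\,(\partial_{x_i}u_p)(\partial_{x_\ell}u_q)
 + \sum_{p=1}^n (\partial_p B)(\s u)\,\partial_{x_i}\partial_{x_\ell}u_p \\
&= \sum_{p,q=1}^n (\partial_p\partial_q B)(\s u)\,a_{pi}a_{q\ell}\,u_p'u_q'
 + \sum_{p=1}^n (\partial_p B)(\s u)\,a_{pi}a_{p\ell}\,u_p''.
\end{align*}
Contracting with $c_{i\ell}$ and summing over $i,\ell$, the second (first-order) term becomes $\sum_p (\partial_p B)(\s u)\,u_p''\sum_{i,\ell}c_{i\ell}a_{pi}a_{p\ell}=\sum_p(\partial_p B)(\s u)\,u_p''\,\langle Ca_p,a_p\rangle$, which is exactly $\partial_t B(\s u)$ and therefore cancels when we form $\big(\partial_t-\sum c_{i\ell}\partial_{x_i}\partial_{x_\ell}\big)B(\s u)$.

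What remains is $-\sum_{i,\ell}c_{i\ell}\sum_{p,q}(\partial_p\partial_q B)(\s u)\,a_{pi}a_{q\ell}\,u_p'u_q'$. Here I would recognize $\sum_{i,\ell}c_{i\ell}a_{pi}a_{q\ell}=\langle Ca_p,a_q\rangle=(A^*CA)_{pq}$, so the expression collapses to $-\sum_{p,q}(A^*CA)_{pq}\,(\mathrm{Hess}\,B)_{pq}(\s u)\,u_p'u_q' = -\big\langle\big((A^*CA)\bullet\mathrm{Hess}\,B(\s u)\big)\s u',\s u'\big\rangle$, which is the right-hand side of \eqref{bellprinc}; this last identification is just unwinding the definition of the Hadamard product and of the quadratic form. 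I do not expect a genuine obstacle — the statement is a bookkeeping identity — but the one point demanding care is the cancellation of the first-order (single-derivative-of-$B$) terms: one must check that the heat-coefficient $\langle Ca_j,a_j\rangle$ appearing in the $t$-derivative matches precisely the coefficient $\sum_{i,\ell}c_{i\ell}a_{ji}a_{j\ell}$ coming out of the $x$-Laplacian acting on $u_j(\langle a_j,x\rangle,t)$, which is why the hypothesis $C$ is symmetric (and the same $C$ is used in both the PDE for $u_j$ and the operator $\sum c_{i\ell}\partial_{x_i}\partial_{x_\ell}$) is essential. Smoothness of $B$ on the interior and of the heat-regularized $u_j(\cdot,t)$ for $t>0$ justifies all the differentiations.
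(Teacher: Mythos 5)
Your computation is correct and is essentially the paper's own proof: a direct chain-rule expansion in which the first-order terms cancel because each $u_\ell(\langle a_\ell,\s{x}\rangle,t)$ satisfies $(\partial_t-\sum c_{ij}\partial_{x_i}\partial_{x_j})u_\ell=0$, leaving exactly the quadratic form $-\langle (A^{*}CA)\bullet \mathrm{Hess}\,B\,\s{u}',\s{u}'\rangle$. The paper merely isolates that cancellation as a preliminary step before differentiating $B(\s{u})$, which is only a difference in presentation.
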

\begin{proof}
First we show that the functions $u_{\ell}(\langle a_{\ell},\s{x}\rangle, t)$ satisfy the following heat equation 
\begin{align*}
\left( \frac{\partial }{\partial t} -\sum_{i,j=1}^{k} c_{ij}\frac{\partial^{2}}{\partial x_{i}\partial x_{j}} \right)u_{\ell}(\langle a_{\ell}, \s{x}\rangle,t)=0,   \quad \text{for any} \quad \ell= 1,\ldots, n.
\end{align*}
Indeed, let $u''_{j}(\langle a_{j},\s{x}\rangle ,t) = \left.\frac{\partial^{2} u_{j}(y,t)}{\partial y^{2}}\right|_{y=\langle a_{j},\s{x} \rangle}$. Then 
\begin{align*}
&\sum_{i,j=1}^{k}c_{ij} \frac{\partial^{2}}{\partial x_{i} \partial x_{j}} u_{\ell}(\langle a_{\ell}, \s{x}\rangle,t)=\sum_{i,j=1}^{k}c_{ij}  \frac{\partial }{\partial x_{i}}\left( a_{\ell j}u'_{\ell}(\langle a_{\ell}, \s{x}\rangle,t) \right)=\\
&\sum_{i,j=1}^{k}c_{ij}   a_{\ell j}a_{\ell i}u''_{\ell}(\langle a_{\ell}, \s{x}\rangle,t)  = \langle Ca_{\ell}, a_{\ell}\rangle  u''_{\ell}(\langle a_{\ell}, \s{x}\rangle,t) = \frac{\partial}{\partial t}u_{\ell}(\langle a_{\ell}, \s{x}\rangle,t).
\end{align*}
Let $\s{u} = \s{u}(\s{x},t)$ and $u_{\ell} = u_{\ell}(\langle a_{\ell},\s{x} \rangle,t)$. Then 
\begin{align*}
&\left( \frac{\partial }{\partial t} -\sum_{i,j=1}^{k} c_{ij}\frac{\partial^{2}}{\partial x_{i}\partial x_{j}} \right) B(\s{u}) = 
\sum_{\ell=1}^{n} \frac{\partial B}{\partial u_{\ell}} \frac{\partial u_{\ell} }{\partial t}-
\sum_{i,j=1}^{k}c_{ij} \frac{\partial }{\partial x_{i}}\left(\sum_{\ell=1}^{n}\frac{\partial B}{\partial u_{\ell}} \frac{\partial u_{\ell}}{\partial x_{j}}\right)=\\
&\sum_{\ell=1}^{n} \frac{\partial B}{\partial u_{\ell}}\left( \sum_{i,j=1}^{k} c_{ij}\frac{\partial^{2}}{\partial x_{i}\partial x_{j}} u_{\ell} \right)-
\sum_{i,j=1}^{k}c_{ij} \left(\sum_{\ell, m=1}^{n}\frac{\partial^{2} B}{\partial u_{m}\partial u_{\ell}} \frac{\partial u_{m}}{\partial x_{i}}\frac{\partial u_{\ell} }{\partial x_{j}} + \sum_{\ell=1}^{n}\frac{\partial B}{\partial u_{\ell}} \frac{\partial^{2} u_{\ell}}{\partial x_{i} \partial x_{j}}\right)=\\
&-\sum_{i,j=1}^{k} \sum_{\ell, m=1}^{n} c_{ij}\frac{\partial^{2} B}{\partial u_{m}\partial u_{\ell}} \frac{\partial u_{m}}{\partial x_{i}}\frac{\partial u_{\ell} }{\partial x_{j}} = -\sum_{i,j=1}^{k} \sum_{\ell, m=1}^{n} c_{ij}\frac{\partial^{2} B}{\partial u_{m}\partial u_{\ell}} a_{mi}a_{\ell j}u'_{m}u'_{\ell}=\\
&-\sum_{\ell, m=1}^{n} \langle Ca_{m}, a_{\ell}\rangle \frac{\partial^{2} B}{\partial u_{m}\partial u_{\ell}} u'_{m}u'_{\ell}=-\langle (A^{*}CA)\bullet \mathrm{Hess}\,B(\s{u})\s{u}', \s{u}' \rangle.
\end{align*}
\end{proof}

\begin{remark}
Let us denote by $\Delta_C:= \sum_{i,j=1}^{k} c_{ij}\frac{\partial^{2}}{\partial x_{i}\partial x_{j}} $. 
We used above that
$$
\left( \frac{\partial }{\partial t} -\Delta_C\right)u_{\ell}(\langle a_{\ell}, \s{x}\rangle,t)=0\ .
$$
 This is exactly the equality that implies
\begin{equation}
\label{dC}
\left( \frac{\partial }{\partial t} -\Delta_C\right)B(u_{\ell}(\langle a_{\ell}, \s{x}\rangle,t))=-\langle (A^{*}CA)\bullet \mathrm{Hess}\,B(\s{u}(\s{x},t))\s{u}'(\s{x},t), \s{u}'(\s{x},t)\rangle\ .
\end{equation}

In other words, we look at the natural ``energy" of the problem $\int B(u_{\ell}(\langle a_{\ell}, \s{x}\rangle,t))\, d\s{x}$ at time $t$, and differentiate it in $t$. Replacing $d/dt$ by $d/dt-\Delta_C$ does not change the result because when we integrate the above equality over $\s{x}$ varibales, we should expect  the term $\int \Delta_C\,B(u_{\ell}(\langle a_{\ell}, \s{x}\rangle,t))\, d\s{x}$ to disappear (and this is exactly what happens below).  But the definite sign in the right hand side of \eqref{bellprinc} guarantees us now the monotonicity property of the energy.

So   composing of the special heat flow $e^{-t\Delta_C}$ and special function $B$ seems like a good idea exactly because of the monotonicity formula, which we are going to obtain shortly below.
\end{remark}

Further we make  several assumptions on the function $B$. The assumption $L3$ is exactly the concavity we were talking about above.
\begin{itemize}
\item[L1.] $B \in C(\mathbb{R}^{n}_{+})\cap C^{2}(\mathrm{int}(\mathbb{R}^{n}_{+}))$.
\item[L2.] $B(\lambda \s{y}) = \lambda^{k}B(\s{y})$ for all $\lambda  \geq 0$ and $\s{y} \in \mathbb{R}^{n}_{+}$.
\item[L3.] There exists $k\times k$ symmetric matrix $C$ such that $(A^{*}CA)\bullet \mathrm{Hess}\,B(\s{y}) \leq 0$ for $\s{y} \in \mathrm{int}(\mathbb{R}^{n}_{+}),$ and $\langle Ca_{j},a_{j} \rangle >0$ for all $j=1,\ldots, n$.
\item[L4.] $B \geq 0$ and $B$ is not identically $0$. 
\item[L5.] 
\begin{align}\label{exp}
\int_{\mathbb{R}^{k}}B(e^{-\langle a_{1},\s{x} \rangle^{2}}, \ldots, e^{-\langle a_{n},\s{x} \rangle^{2}}) d\s{x} <\infty.
\end{align}
\end{itemize}

We make several observations: properties $L3$ and $L4$ imply that the function $B$ is  separately concave (i.e. concave  with respect to each variable) and increasing with respect to each variable, moreover, $B> 0$ in $\mathrm{int}(\mathbb{R}^{n}_{+})$. The above properties imply that 
\begin{align}\label{exp2}
\int_{\mathbb{R}^{k}} B(b_{1}e^{-\delta_{1}\langle a_{1},\s{x} \rangle^{2}}, \ldots,b_{n} e^{-\delta_{2}\langle a_{n},\s{x} \rangle^{2}}) d\s{x} < \infty
\end{align}
for any positive numbers $b_{j}, \delta_{j} >0$.
  
Consider the following class of functions $E(\mathbb{R})$: $u \in E(\mathbb{R})$ if and only if  there exist constants $b, \delta >0$ such that $|u(y)| \leq b e^{-\delta y^{2}}$. It is clear that if $u\in E(\mathbb{R})$ then 
$u(y,t) \in E(\mathbb{R})$ for any $t \geq 0$ where $u(y,t)$ denotes heat extension of $u(y)$ i.e. $u(y,t)=u(y,0)$ and $\frac{\partial }{\partial t} u(y,t)  =\sigma \frac{\partial^{2}}{\partial y^{2}} u(y,t)$ with some $\sigma >0$. Note that $E(\mathbb{R})$ contains the functions with compact support. Also note that  if nonnegative functions $u_{j}$ belong to the class $E(\mathbb{R})$ then the following function 
\begin{align}\label{bell}
\s{B}(t) = \int_{\mathbb{R}^{k}}B(u_{1}(\langle a_{1},x  \rangle,t), \ldots, u_{n}(\langle a_{n},x  \rangle,t)) d\s{x}. 
\end{align} 
is finite for any $t \geq 0$.

\begin{lemma}
\label{l:vanish}
Let $u_{j}$ be nonnegative functions from $E(\mathbb{R})$. 
Then for any $t \in (0,\infty)$ we have 

\begin{align*}
\lim_{r\to \infty}\int_{V_{r}} \sum_{i,j=1}^{k} c_{ij}\frac{\partial^{2}}{\partial x_{i} \partial x_{j}} B(\s{u}(\s{x},t)) dx =0 \quad  \text{where}\quad  V_{r}=\{ x \in \mathbb{R}^{k} : \|x\|\leq r\}.
\end{align*}
\end{lemma}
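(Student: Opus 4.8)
The plan is to convert the volume integral of $\Delta_C B(\mathbf{u})$ over the ball $V_r$ into a boundary integral via the divergence theorem, and then show that this boundary flux decays to zero as $r\to\infty$ because of the Gaussian decay of the $u_j$'s. First I would rewrite $\sum_{i,j}c_{ij}\partial_{x_i}\partial_{x_j}B(\mathbf{u}(\mathbf{x},t)) = \operatorname{div}_{\mathbf{x}} F(\mathbf{x},t)$ where the vector field $F$ has $i$-th component $\sum_{j=1}^k c_{ij}\,\partial_{x_j}\bigl(B(\mathbf{u}(\mathbf{x},t))\bigr) = \sum_{j=1}^{k}c_{ij}\sum_{\ell=1}^n \frac{\partial B}{\partial u_\ell}(\mathbf{u})\,a_{\ell j}\,u'_\ell(\langle a_\ell,\mathbf{x}\rangle,t)$. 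By Gauss--Green, $\int_{V_r}\Delta_C B(\mathbf{u})\,d\mathbf{x} = \int_{\partial V_r} \langle F(\mathbf{x},t),\nu(\mathbf{x})\rangle\, d\sigma(\mathbf{x})$, with $\nu$ the outward unit normal on the sphere of radius $r$.

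Next I would estimate the integrand on $\partial V_r$. Since each $u_j\in E(\mathbb{R})$, its heat extension satisfies $u_j(y,t)\le b_j e^{-\delta_j y^2}$ and, by differentiating the heat kernel representation, $|u'_j(y,t)|\le b_j' e^{-\delta_j' y^2}$ for suitable constants and any fixed $t>0$. Plugging in $y=\langle a_j,\mathbf{x}\rangle$, each factor $u'_\ell(\langle a_\ell,\mathbf{x}\rangle,t)$ is bounded by $b_\ell' e^{-\delta_\ell' \langle a_\ell,\mathbf{x}\rangle^2}$. The partial derivatives $\frac{\partial B}{\partial u_\ell}$ are evaluated at the point $\mathbf{u}(\mathbf{x},t)$, whose coordinates are bounded and bounded away from problems only on the interior; here I would use $k$-homogeneity (L2), which gives $\frac{\partial B}{\partial u_\ell}(\lambda\mathbf{y})=\lambda^{k-1}\frac{\partial B}{\partial u_\ell}(\mathbf{y})$, so that $\frac{\partial B}{\partial u_\ell}$ grows at most polynomially (degree $k-1$) in $\|\mathbf{u}\|$; combined with the Gaussian bounds on the $u_\ell$, the quantity $\frac{\partial B}{\partial u_\ell}(\mathbf{u}(\mathbf{x},t))$ is dominated by a constant times a Gaussian in the variables $\langle a_\ell,\mathbf{x}\rangle$. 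Hence $|\langle F,\nu\rangle|\le P(\mathbf{x}) \exp\bigl(-c\sum_{j=1}^n \langle a_j,\mathbf{x}\rangle^2\bigr)$ on $\partial V_r$ for some polynomial $P$ and $c>0$; note $\sum_j\langle a_j,\mathbf{x}\rangle^2 = \langle A^*A\,\mathbf{x},\mathbf{x}\rangle$ is a positive definite quadratic form since the $a_j$ span $\mathbb{R}^k$ (this is implicit in L3, which requires $\langle C a_j,a_j\rangle>0$ and $A^*CA\bullet\mathrm{Hess}\,B\le 0$ to be meaningful; if the $a_j$ did not span, the left side of L3 could not be negative definite enough, and in any case one needs $A$ of full rank for the whole setup — I would state this as a standing assumption). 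Therefore the integrand decays like $e^{-c\,r^2}$ times a polynomial in $r$ uniformly on $\partial V_r$.

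Finally, I would bound $\Bigl|\int_{\partial V_r}\langle F,\nu\rangle\,d\sigma\Bigr| \le (\text{surface area of }\partial V_r)\cdot \max_{\|\mathbf{x}\|=r}|\langle F,\nu\rangle| \le C\, r^{k-1}\cdot Q(r)\, e^{-c r^2} \to 0$ as $r\to\infty$, which is exactly the claimed limit.

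\textbf{Main obstacle.} The delicate point is controlling $\nabla B$ at $\mathbf{u}(\mathbf{x},t)$ uniformly: $B$ is only assumed $C^2$ on the \emph{interior} of $\mathbb{R}^n_+$, while $\mathbf{u}(\mathbf{x},t)$ has strictly positive coordinates for $t>0$ (heat extensions of nonnegative, not-identically-zero functions are strictly positive), so the point stays in the interior, but as $\|\mathbf{x}\|\to\infty$ the coordinates $u_j(\langle a_j,\mathbf{x}\rangle,t)$ tend to $0$, i.e.\ approach the boundary, where $\nabla B$ a priori need not be bounded. This is precisely where homogeneity (L2) must be used: writing $\mathbf{u} = \rho\,\boldsymbol{\omega}$ with $\rho = \|\mathbf{u}\|$ and $\boldsymbol{\omega}$ on the unit simplex, we only need $\nabla B$ bounded on a neighborhood of the relevant part of the sphere intersected with $\mathrm{int}(\mathbb{R}^n_+)$ — but $\boldsymbol{\omega}$ itself can approach the boundary of the simplex when some $\langle a_j,\mathbf{x}\rangle$ is large relative to the others. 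The clean way around this is to not extract all the decay from a single pointwise bound on $\nabla B$, but instead to keep track of which coordinates of $\mathbf{u}$ are small: near a face of $\mathbb{R}^n_+$ where coordinates $\ell\in S$ vanish, one uses that the remaining coordinates $\ell\notin S$ are bounded below (since $\langle a_\ell,\mathbf{x}\rangle$ is bounded there) and that $B$ restricted to directions along that face, rescaled by homogeneity, is $C^2$ up to that face by the structural lemma (the representation $B(x_1,\dots,x_n)=x_1\tilde B(x_2/x_1,\dots)$ type decomposition) — alternatively, and more simply, one invokes the already-established consequence of L3--L4 that $B$ is increasing and concave in each variable, so $\partial B/\partial u_\ell(\mathbf{u}) \le \partial B/\partial u_\ell$ evaluated with the $\ell$-th coordinate replaced by a fixed small positive number, keeping the point in a compact subset of the interior where $\nabla B$ is bounded. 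Making this localization precise — covering $\partial V_r$ by finitely many sectors according to which linear forms $\langle a_j,\mathbf{x}\rangle$ are dominant — is the only real work; once done, the Gaussian beats the polynomial surface-area factor and the limit is zero.
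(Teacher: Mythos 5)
Your opening move (the divergence theorem, reducing the claim to decay of the boundary flux over $\partial V_r$) is exactly the paper's, and you correctly locate the difficulty: $\nabla B$ need not be bounded as $\mathbf{u}(\mathbf{x},t)$ approaches $\partial\mathbb{R}^n_+$, which is where it goes as $\|\mathbf{x}\|\to\infty$. But neither of your proposed resolutions closes this gap. The homogeneity relation $\frac{\partial B}{\partial u_\ell}(\lambda\mathbf{y})=\lambda^{k-1}\frac{\partial B}{\partial u_\ell}(\mathbf{y})$ controls only radial scaling; it yields polynomial growth in $\|\mathbf{u}\|$ only if $\nabla B$ were bounded on the unit sphere inside $\mathbb{R}^n_+$, which is false already for $B=\prod_j y_j^{1/p_j}$ with some $p_j>1$ (there $\partial B/\partial y_1\sim y_1^{1/p_1-1}\to\infty$ as $y_1\to 0$). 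Moreover the dangerous regime is not $\|\mathbf{u}\|\to\infty$ but $\mathbf{u}$ approaching a \emph{face} of the orthant non-radially, with different coordinates decaying at different Gaussian rates. Your concavity workaround also points the wrong way: separate concavity makes $\partial B/\partial u_\ell$ \emph{non-increasing} in $u_\ell$, so replacing a coordinate $u_\ell\to 0$ by a fixed positive $\epsilon$ gives a \emph{lower} bound for $\partial B/\partial u_\ell$, not an upper one. The sector decomposition you defer to the end is precisely the unproved content.

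The paper never bounds $\nabla B$ alone. From $k$-homogeneity, Euler's identity $\sum_\ell y_\ell\,\partial_{y_\ell}B=kB$ together with $\partial_{y_\ell}B\ge 0$ gives $y_\ell\,\partial_{y_\ell}B(\mathbf{y})\le kB(\mathbf{y})$; combined with the heat-flow gradient estimate $|u'_\ell(y,t)|\le L|y|\,u_\ell(y,t)$ (valid for each fixed $t>0$), this bounds each term $\partial_{u_\ell}B\cdot|u'_\ell|$ of the flux by $C\,r\,B(\mathbf{u}(\mathbf{x},t))$ on $\partial V_r$ --- the possible blow-up of $\partial_{u_\ell}B$ is exactly absorbed by the accompanying factor $u_\ell$. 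A second difference is that no pointwise Gaussian decay is claimed or needed: using $B(\mathbf{u}(\mathbf{x},t))\le C_3F(C_2\mathbf{x})$ with $F(\mathbf{x})=B(e^{-\langle a_1,\mathbf{x}\rangle^2},\dots,e^{-\langle a_n,\mathbf{x}\rangle^2})$, the boundary integral is at most $C_1r^k\tilde F(C_2r)$, where $\tilde F(r)=\int_{\mathbb{S}^{k-1}_1}F(r\sigma)\,d\sigma$, and $r^k\tilde F(r)\to 0$ follows from L5 (i.e.\ $\int_0^\infty\tilde F(r)r^{k-1}\,dr<\infty$) together with monotonicity of $\tilde F$ along rays --- which is all the decay the hypotheses actually provide. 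As written, your argument has a genuine gap at its central estimate; the missing idea is the Euler-identity bound $y_\ell\,\partial_{y_\ell}B\le kB$.
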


\begin{proof}
Let $F(x) = B(e^{-\langle a_{1},x\rangle^{2}}, \ldots, e^{-\langle a_{n},x\rangle^{2}})$. Let $x=r \sigma$ where $\sigma \in \mathbb{S}^{k-1}_{1}$.  Since $B$ is increasing with respect to each components,  for each $\sigma$ the function $F(r\sigma)$ is decreasing with respect to $r$. Therefore the function $\tilde{F}(r)=\int_{\mathbb{S}^{k-1}_{1}} F(r\sigma)d\sigma_{1}$ is decreasing. Here $\sigma_{r}$ denotes surface measure of the sphere $\mathbb{S}^{k-1}_{r}$ or radius $r$. Since $\int_{0}^{\infty} \tilde{F}(r) r^{k-1}dr<\infty$ we obtain 
\begin{align*}
R^{k}\tilde{F}(2R)\leq R \min_{R\leq r\leq 2R} \tilde{F}(r)r^{k-1}\leq \int_{R}^{2R}\tilde{F}(r)r^{k-1}dr.
\end{align*}
This implies that $\lim_{r \to \infty} r^{k}\tilde{F}(r)=0$. 

By Stokes' formula we have 
\begin{align*}
\int_{V_{r}} \frac{\partial^{2}}{\partial x_{i} \partial x_{j}} B(\s{u}(\s{x},t)) = 
\int_{\partial V_{r}} \frac{\partial}{ \partial x_{j}} B(\s{u}(\s{x},t)) n_{i} d\sigma_{r}
\end{align*}
where $n_{i}$ is the $i$-th component of the unit normal vector to the boundary of the ball $V_{r}$. 

Homogeneity of $B$ implies that $\sum _{j=1}^{n}\frac{\partial}{\partial y_{j}} B(\s{y}) y_{j}=kB(\s{y}).$ Since $\frac{\partial}{\partial y_{j}} B(\s{y}) \geq 0$ we obtain  estimate $\frac{\partial }{\partial y_{j}} B(\s{y})y_{j} \leq kB(\s{y})$. Also we note that for each $t>0$ there exists a constant $L$ depending on the parameters $t, u_{j}$ such that $\big|\frac{\partial }{\partial y} u_{j}(y,t)\big|\leq L y u_{j}(y,t)$ for all $y \in \mathbb{R}$. 

So we obtain that 
\begin{align*}
&\left| \int_{\partial V_{r}} \frac{\partial}{ \partial x_{j}} B(\s{u}(\s{x},t)) n_{i} d\sigma_{r} \right| \leq  \sum_{\ell=1}^{n} \int_{\partial V_{r}} \frac{\partial B}{\partial u_{\ell}} \left| \frac{\partial u_{\ell} (\langle a_{\ell},\s{x} \rangle, t )}{\partial x_{j}} \right| d\sigma_{r}\leq \\
&c\int_{\partial V_{r}} r B(\s{u}(\s{x},t))d \sigma_{r} \leq C_{1}r^{k}\tilde{F}(C_{2}r)
\end{align*}
where constants $C_{1},C_{2}$ do not depend on $r$. Since $B$ is homogeneous and it is increasing with respect to each components the last inequality follows from the the observation $B(\s{u}(\s{x},t)) \leq C_{3} F(C_{2} x)$ where $C_{3}, C_{2}$ do not depend on $x$.   So the lemma follows. 

\end{proof}

\begin{remark}\label{compact}
Lemma \ref{l:vanish} holds even if we take supremum with respect to $t$ over any compact subset of $(0, \infty)$.
\end{remark}

\begin{corollary}
\label{increasing}
The function $\s{B}(t)$ is increasing  for $t >0$, and it is constant if and only if  $(A^{*}CA)\bullet \mathrm{Hess}\,B(\s{u}(\s{x},t))\s{u}'(\s{x},t)=\s{0}$ for all $x \in\mathbb{R}^{n}$ and any $t >0$. 
\end{corollary}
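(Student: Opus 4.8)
The plan is to differentiate the energy $\s B(t)$ in $t$ and feed the pointwise identity of Lemma~\ref{quadratic} into the integral, using Lemma~\ref{l:vanish} to discard the $\Delta_C$-term. Fix $0<t_1<t_2<\infty$. For each fixed $\s x$ the map $s\mapsto\s u(\s x,s)$ is smooth and takes values in $\mathrm{int}(\mathbb R^n_+)$ for $s>0$ (heat extensions of the nonnegative, not-identically-zero $u_j$ are strictly positive), so $s\mapsto B(\s u(\s x,s))$ is $C^1$ there and $B(\s u(\s x,t_2))-B(\s u(\s x,t_1))=\int_{t_1}^{t_2}\partial_s B(\s u(\s x,s))\,ds$. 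Integrating over $V_r$ (where Fubini is immediate since the integrand is continuous on $V_r\times[t_1,t_2]$) and then letting $r\to\infty$ — the left side converging to $\s B(t_2)-\s B(t_1)$ by dominated convergence against a majorant of the type $C\,F(c\s x)$ built from the $E(\mathbb R)$-class bounds and \eqref{exp2} — gives
\[
\s B(t_2)-\s B(t_1)=\lim_{r\to\infty}\int_{t_1}^{t_2}\!\int_{V_r}\partial_s B(\s u(\s x,s))\,d\s x\,ds .
\]

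Now substitute $\partial_s B(\s u)=\Delta_C B(\s u)-Q$, where $Q:=\langle(A^*CA)\bullet\mathrm{Hess}\,B(\s u)\,\s u',\s u'\rangle$, from Lemma~\ref{quadratic}. By Lemma~\ref{l:vanish} together with Remark~\ref{compact}, $\int_{V_r}\Delta_C B(\s u(\s x,s))\,d\s x\to 0$ as $r\to\infty$, uniformly for $s\in[t_1,t_2]$, so that term drops out. The remaining integrand $-Q$ is nonnegative: by $L3$ the symmetric matrix $(A^*CA)\bullet\mathrm{Hess}\,B(\s u(\s x,s))$ is negative semidefinite on $\mathrm{int}(\mathbb R^n_+)$, and $\s u(\s x,s)$ lies there. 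Since $\int_{V_r}(-Q)\,d\s x$ is nondecreasing in $r$, the monotone convergence theorem yields
\[
\s B(t_2)-\s B(t_1)=-\int_{t_1}^{t_2}\!\int_{\mathbb R^k}\big\langle(A^*CA)\bullet\mathrm{Hess}\,B(\s u(\s x,s))\,\s u'(\s x,s),\s u'(\s x,s)\big\rangle\,d\s x\,ds\ \ge\ 0,
\]
so $\s B$ is increasing on $(0,\infty)$.

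For the equality case: from the displayed formula, $\s B$ is constant on $(0,\infty)$ if and only if $\int_{t_1}^{t_2}g(s)\,ds=0$ for all $0<t_1<t_2$, where $g(s):=\int_{\mathbb R^k}(-Q)(\s x,s)\,d\s x\ge 0$; equivalently $g=0$ a.e.\ on $(0,\infty)$, and since $g$ is continuous (dominated convergence again, using $B\in C^2(\mathrm{int}\,\mathbb R^n_+)$ and the Gaussian majorant), $g(s)=0$ for every $s>0$. For fixed $s>0$ the function $\s x\mapsto(-Q)(\s x,s)$ is continuous and nonnegative, so $g(s)=0$ forces it to vanish identically in $\s x$. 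Finally, for a negative semidefinite matrix $M$ one has $\langle Mv,v\rangle=0\Rightarrow Mv=\s 0$; applying this with $M=(A^*CA)\bullet\mathrm{Hess}\,B(\s u(\s x,s))$ and $v=\s u'(\s x,s)$ gives $(A^*CA)\bullet\mathrm{Hess}\,B(\s u(\s x,s))\,\s u'(\s x,s)=\s 0$ for all $\s x$ and all $s>0$. The converse direction is immediate: if this vector vanishes everywhere then $Q\equiv 0$ and the displayed formula gives $\s B(t_2)=\s B(t_1)$.

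The main obstacle is not the logic but the analytic bookkeeping hidden in the two limit passages: constructing a single integrable majorant in $\s x$, uniform for $s$ in compact subintervals of $(0,\infty)$, that controls simultaneously $B(\s u)$, its $s$-derivative, and the quadratic form $Q$ (so that dominated/monotone convergence and Fubini are all licensed), and confirming that $\int_{V_r}\Delta_C B(\s u)\,d\s x$ really tends to $0$ uniformly in $s$. All of this is powered by property $L4$ (monotonicity of $B$), the estimate \eqref{exp2}, the Gaussian gradient bound $|\partial_y u_j(y,s)|\le L\,y\,u_j(y,s)$, the Euler relation $\sum_j y_j\,\partial_{y_j}B=kB$ and its Hessian analogue, Lemma~\ref{l:vanish}, and Remark~\ref{compact}.
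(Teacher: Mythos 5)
Your proposal is correct and follows essentially the same route as the paper's (very terse) proof: integrate the pointwise identity of Lemma~\ref{quadratic} in $t$ over $[t_1,t_2]$ and over the balls $V_r$, discard the $\Delta_C$-term via Lemma~\ref{l:vanish} and Remark~\ref{compact}, and use the sign condition $L3$ for monotonicity and the standard fact that $\langle Mv,v\rangle=0$ forces $Mv=\s 0$ for negative semidefinite $M$ in the equality case. The only extra care worth noting is in the equality direction: rather than asserting continuity of $s\mapsto\int(-Q)\,d\s x$, it is cleaner to argue that if $-Q>0$ at one point then joint continuity makes $\int_{t_1}^{t_2}\int(-Q)>0$ for suitable $t_1<t_2$, contradicting constancy.
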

\begin{proof}
First we integrate (\ref{bellprinc}) with respect to $t$ (over any closed interval $[t_{1},t_{2}] \subset (0,\infty)$) and then we integrate other the balls $V_{r}$.  Thus the corollary is immediate consequence of Lemmas \ref{quadratic}, \ref{l:vanish} and Remark \ref{compact}.
\end{proof}

Thus we obtain an inequality $\s{B}(t_{1})\leq \s{B}(t_{2})$  for $0<t_{1} \leq t_{2} <\infty$ and we want to pass to the limits.
\begin{lemma}
\label{limits}
Let $B$ satisfies assumptions $L1-L5$ and let $u_{j} \in E(\mathbb{R})$ be nonnegative (not identically zero) functions. Then the following  equalities hold
\begin{align}
&\lim_{t \to 0}\,\s{B}(t)=\int_{\mathbb{R}^{k}}B(u_{1}(\langle a_{1}, \s{x}\rangle), \ldots, u_{n}(\langle a_{n}, \s{x}\rangle))d\s{x}, \label{lim1}\\
&\lim_{t \to \infty}\,\s{B}(t)=\int_{\mathbb{R}^{k}}B\left( \frac{e^{-\frac{ \langle a_{1},\s{x} \rangle^{2} }{\langle Ca_{1},a_{1} \rangle }}}{\sqrt{ \pi\langle Ca_{1},a_{1}\rangle}}\int_{\mathbb{R}}u_{1}dx, \ldots,  \frac{e^{-\frac{\langle a_{n},\s{x} \rangle^{2} }{\langle Ca_{n},a_{n} \rangle }}}{\sqrt{\pi \langle Ca_{n},a_{n}\rangle}}\int_{\mathbb{R}}u_{n}dx\right)d\s{x}. \label{lim2}
\end{align}
\end{lemma}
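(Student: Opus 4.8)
The plan is to obtain both equalities from Lebesgue's dominated convergence theorem applied to $\mathbf{B}(t)=\int_{\mathbb{R}^{k}}B(\mathbf{u}(\mathbf{x},t))\,d\mathbf{x}$: I will rewrite the integrand so that it converges pointwise to the stated limit and admits a majorant \emph{independent of $t$} of exactly the type appearing in (\ref{exp2}), so that $L5$ supplies the needed integrable dominating function. Write $\sigma_{j}:=\langle Ca_{j},a_{j}\rangle>0$ and let $p^{(j)}_{s}(y):=(4\pi\sigma_{j}s)^{-1/2}e^{-y^{2}/(4\sigma_{j}s)}$ be the heat kernel for which $u_{j}(\cdot,t)=p^{(j)}_{t}*u_{j}$. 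The one structural fact used repeatedly: since $u_{j}\in E(\mathbb{R})$, i.e. $0\le u_{j}(y)\le b_{j}e^{-\delta_{j}y^{2}}$, we may write $b_{j}e^{-\delta_{j}y^{2}}=b'_{j}\,p^{(j)}_{\tau_{j}}(y)$ with $\tau_{j}:=(4\sigma_{j}\delta_{j})^{-1}$ and $b'_{j}:=b_{j}\sqrt{\pi/\delta_{j}}$, whence the Chapman--Kolmogorov identity $p^{(j)}_{t}*p^{(j)}_{\tau_{j}}=p^{(j)}_{t+\tau_{j}}$ gives the clean bound $u_{j}(y,t)\le b'_{j}\,p^{(j)}_{t+\tau_{j}}(y)$ for all $t\ge 0$.

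\emph{The limit (\ref{lim1}).} As $t\to0^{+}$ one has $u_{j}(\cdot,t)=p^{(j)}_{t}*u_{j}\to u_{j}$ at every Lebesgue point of $u_{j}$, hence a.e.\ on $\mathbb{R}$; since $a_{j}\ne0$, the map $\mathbf{x}\mapsto\langle a_{j},\mathbf{x}\rangle$ pulls null sets of $\mathbb{R}$ back to null sets of $\mathbb{R}^{k}$ (slice along $(a_j)^{\perp}$ and use Fubini), so by continuity of $B$ ($L1$) the integrand $B(\mathbf{u}(\mathbf{x},t))$ converges for a.e.\ $\mathbf{x}$ to the right-hand side integrand of (\ref{lim1}). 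For the majorant, for $t\in[0,1]$ the bound above gives $u_{j}(y,t)\le b'_{j}\,p^{(j)}_{t+\tau_{j}}(y)\le\frac{b'_{j}}{\sqrt{4\pi\sigma_{j}\tau_{j}}}\,e^{-y^{2}/(4\sigma_{j}(1+\tau_{j}))}$ (estimating the prefactor at $t=0$ and the exponential at $t=1$), a fixed Gaussian; monotonicity of $B$ in each variable ($L3$, $L4$) then bounds $B(\mathbf{u}(\mathbf{x},t))$, uniformly in $t\in[0,1]$, by a function of the form $B(c_{1}e^{-\gamma_{1}\langle a_{1},\mathbf{x}\rangle^{2}},\dots,c_{n}e^{-\gamma_{n}\langle a_{n},\mathbf{x}\rangle^{2}})$, integrable by (\ref{exp2}). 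Dominated convergence gives (\ref{lim1}).

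\emph{The limit (\ref{lim2}).} Here one first rescales: substituting $\mathbf{x}=2\sqrt{t}\,\xi$ and using the $k$-homogeneity $L2$ (with $\lambda=2\sqrt t$) to absorb the Jacobian $(2\sqrt{t})^{k}$ into the arguments of $B$,
\[
\mathbf{B}(t)=\int_{\mathbb{R}^{k}}B\bigl(v^{(t)}_{1}(\langle a_{1},\xi\rangle),\dots,v^{(t)}_{n}(\langle a_{n},\xi\rangle)\bigr)\,d\xi,\qquad v^{(t)}_{j}(y):=2\sqrt{t}\,u_{j}(2\sqrt{t}\,y,t).
\]
The scaling identity $p^{(j)}_{t}(w)=(2\sqrt{t})^{-1}p^{(j)}_{1/4}\bigl(w/(2\sqrt{t})\bigr)$ turns this into $v^{(t)}_{j}(y)=\int_{\mathbb{R}}p^{(j)}_{1/4}\bigl(y-z/(2\sqrt{t})\bigr)u_{j}(z)\,dz$; since $p^{(j)}_{1/4}$ is bounded and continuous and $u_{j}\in L^{1}$, dominated convergence in the $z$-integral shows that as $t\to\infty$,
\[
v^{(t)}_{j}(y)\longrightarrow p^{(j)}_{1/4}(y)\int_{\mathbb{R}}u_{j}=\frac{e^{-y^{2}/\sigma_{j}}}{\sqrt{\pi\sigma_{j}}}\int_{\mathbb{R}}u_{j}\qquad\text{for every }y,
\]
which is precisely the $j$-th argument on the right of (\ref{lim2}), so the integrand converges pointwise to the target. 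For the majorant, the semigroup bound gives $v^{(t)}_{j}(y)\le 2\sqrt{t}\,b'_{j}\,p^{(j)}_{t+\tau_{j}}(2\sqrt{t}\,y)=\frac{b'_{j}\sqrt{t}}{\sqrt{\pi\sigma_{j}(t+\tau_{j})}}\,e^{-ty^{2}/(\sigma_{j}(t+\tau_{j}))}$, which for $t\ge\tau_{j}$ is at most the fixed Gaussian $\frac{b'_{j}}{\sqrt{\pi\sigma_{j}}}\,e^{-y^{2}/(2\sigma_{j})}$; monotonicity of $B$ and (\ref{exp2}) again furnish an integrable dominating function, and dominated convergence yields (\ref{lim2}).

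The conceptual heart, and the only step that is not elementary bookkeeping, is the large-$t$ analysis for (\ref{lim2}): one must pick the right spatial rescaling, invoke $k$-homogeneity to convert the Jacobian into a rescaling of the arguments of $B$, and recognize that the rescaled heat solutions $v^{(t)}_{j}$ converge to the normalized Gaussian of variance governed by $\sigma_{j}=\langle Ca_{j},a_{j}\rangle$ carrying the full mass $\int_{\mathbb R}u_{j}$ — all the while keeping enough uniform Gaussian control to pass the limit inside the integral. The $t\to0$ statement is routine approximate-identity business, and I anticipate no obstruction beyond the explicit heat-kernel estimates indicated above.
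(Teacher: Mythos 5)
Your proof is correct and follows essentially the same route as the paper's: a parabolic rescaling combined with the $k$-homogeneity $L2$ to identify the $t\to\infty$ limit, and dominated convergence with fixed Gaussian majorants (supplied by $u_j\in E(\mathbb{R})$ together with (\ref{exp2})) for both limits. The only differences are cosmetic --- you produce the Gaussian domination via the Chapman--Kolmogorov identity rather than by completing the square in the inner integral, and you are more explicit than the paper about the a.e.\ pointwise convergence $u_j(\cdot,t)\to u_j$ as $t\to 0$.
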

\begin{proof}
Take any nonnegative (not identically zero) functions $u_{j} \in E(\mathbb{R})$. Then there exist positive numbers $\beta_{j}, \delta_{j}$ such that  $u_{j}(y) \leq \beta_{j} e^{-\delta_{j}y^{2}}$ for all $j=1,\ldots, n$.  Note that 
\begin{align*}
&u_{j}(y,t) = \frac{1}{(4\pi t \langle C a_{j},a_{j} \rangle )^{1/2}}\int_{\mathbb{R}}u_{j}(x) e^{-\frac{(y-x)^{2}}{4t \langle C a_{j},a_{j} \rangle}}dx \leq \\
& \frac{\beta_{j}}{\sqrt{1+4t\delta_{j} \langle Ca_{j},a_{j} \rangle}} e^{-\frac{y^{2}\delta_{j}}{1+4t\delta_{j} \langle Ca_{j},a_{j} \rangle}}
\end{align*}
So the first limit (\ref{lim1}) follows immediately from Lebesgue's dominated convergence theorem. For the second limit (\ref{lim2}) we use homogeneity of the function $B$. So by changing variable $\s{x} = \s{y}\sqrt{t}$ we obtain
\begin{align*}
&\int_{\mathbb{R}^{k}}B\left( \ldots ,
\frac{1}{(4\pi t \langle C a_{j},a_{j} \rangle )^{1/2}}\int_{\mathbb{R}}u_{j}(x) e^{-\frac{(\langle a_{j},\s{x} \rangle -x)^{2}}{4t \langle C a_{j},a_{j} \rangle}}dx,\ldots\right)d\s{x}=\\
&\int_{\mathbb{R}^{k}}B\left( \ldots ,
\frac{e^{-\frac{\langle a_{j},\s{y} \rangle^{2}}{4 \langle C a_{j},a_{j} \rangle}}}{(4\pi  \langle C a_{j},a_{j} \rangle )^{1/2}}\int_{\mathbb{R}}u_{j}(x) e^{\frac{2\sqrt{t}\langle a_{j},\s{y} \rangle x - x^{2}}{4t \langle C a_{j},a_{j} \rangle}}dx,\ldots\right)d\s{y}.
\end{align*}
It is clear that that for each fixed $\s{y}$ integrand tends to 
$$
B\left( \ldots ,
\frac{e^{-\frac{\langle a_{j},\s{y} \rangle^{2}}{4 \langle C a_{j},a_{j} \rangle}}}{(4\pi  \langle C a_{j},a_{j} \rangle )^{1/2}}\int_{\mathbb{R}}u_{j}(x) dx,\ldots\right).
$$
Since $u_{j}(x) \leq b_{j} e^{-\delta_{j} x^{2}}$ we obtain 
\begin{align*}
u_j(x)e^{\frac{2\sqrt{t}\langle a_{j},\s{y} \rangle x }{4t \langle C a_{j},a_{j} \rangle}}
\le b_j e^{-\frac{\delta_j}2 x^2} e^{\max_{x\ge 0}[-\frac{\delta_j}2 x^2 +\alpha_j(t) x]},
\end{align*}
where $\alpha_j(t):= \frac{\La a_j, \s{y}\Ra}{2\sqrt{t} \La C a_j, a_j\Ra}$.
Hence
\begin{align*}
u_j(x)e^{\frac{2\sqrt{t}\langle a_{j},\s{y} \rangle x }{4t \langle C a_{j},a_{j} \rangle}}
\le b_j e^{-\frac{\delta_j}2 x^2} e^{\frac12\frac{\alpha_j(t)^2}{\delta_j}}= b_j e^{-\frac{\delta_j}2 x^2} e^{\frac{\La a_j, \s{y}\Ra^2}{8t\delta_j \La C a_j, a_j\Ra^2}},
\end{align*}
Now we can apply Lebesgue's dominated convergence theorem twice.

The last display estimate gives us a summable majorant for the integration in $x$. On the other hand,
$$
e^{-\frac{\langle a_{j},\s{y} \rangle^{2}}{4 \langle C a_{j},a_{j} \rangle}}e^{\frac{\La a_j, \s{y}\Ra^2}{8t\delta_j \La C a_j, a_j\Ra^2}} \le e^{-\frac{\langle a_{j},\s{y} \rangle^{2}}{8 \langle C a_{j},a_{j} \rangle}}
$$
for all $t\ge t_C$. Thus we get the uniform in $t$ estimate  for the $j$th argument of function $B$: 
$$
\frac{\gamma_j\,e^{-\frac{\langle a_{j},\s{y} \rangle^{2}}{8 \langle C a_{j},a_{j} \rangle}}}{(4\pi  \langle C a_{j},a_{j} \rangle )^{1/2}}\,,
$$
where $\gamma_j:= \int_{\mathbb{R}}b_{j}e^{-\frac{\delta_j}2 x^2}dx$.
Therefore we have the summable majorant (that it is summable follows from $L3$)
$$
B\left( \ldots ,
\frac{\gamma_j\,e^{-\frac{\langle a_{j},\s{y} \rangle^{2}}{8 \langle C a_{j},a_{j} \rangle}}}{(4\pi  \langle C a_{j},a_{j} \rangle )^{1/2}},\ldots \right)\,,
$$
and the lemma is proved.
\end{proof}

Corollary \ref{increasing} and Lemma \ref{limits} imply the following theorem.
\begin{theorem}\label{mainth}
Let $B$ satisfies assumptions $L1-L5$ and let $u_{j} \in E(\mathbb{R})$ be nonnegative (not identically zero) functions. Then we have 
\begin{align}
&\int_{\mathbb{R}^{k}}B(u_{1}(\langle a_{1}, \s{x}\rangle), \ldots, u_{n}(\langle a_{n}, \s{x}\rangle))d\s{x} \leq \label{mainin}\\
&\int_{\mathbb{R}^{k}}B\left( \frac{e^{-\frac{ \langle a_{1},\s{x} \rangle^{2} }{\langle Ca_{1},a_{1} \rangle }}}{\sqrt{ \pi\langle Ca_{1},a_{1}\rangle}}\int_{\mathbb{R}}u_{1}, \ldots,  \frac{e^{-\frac{\langle a_{n},\s{x} \rangle^{2} }{\langle Ca_{n},a_{n} \rangle }}}{\sqrt{\pi \langle Ca_{n},a_{n}\rangle}}\int_{\mathbb{R}}u_{n}\right)d\s{x}.\nonumber
\end{align}

Equality holds if and only if 
\begin{align}\label{eqcase}
(A^{*}CA)\bullet \mathrm{Hess}\,B(\s{u}(\s{x},t))\s{u}'(\s{x},t)=\s{0} \quad \text{for all} \quad \s{x} \in\mathbb{R}^{n}\quad  \text{and any} \quad  t >0.
\end{align}
\end{theorem}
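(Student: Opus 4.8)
The plan is to run the heat-flow monotonicity scheme on the quantity $\s{B}(t)$ from \eqref{bell}: first show that $\s{B}$ is nondecreasing on $(0,\infty)$, then identify its two endpoint limits, and finally read off the equality case. The monotonicity is precisely Corollary \ref{increasing}: by Lemma \ref{quadratic} one has the pointwise identity $(\partial_t-\Delta_C)B(\s{u}(\s{x},t)) = -\langle (A^{*}CA)\bullet\mathrm{Hess}\,B(\s{u}(\s{x},t))\,\s{u}'(\s{x},t),\s{u}'(\s{x},t)\rangle$, and the right-hand side is $\geq 0$ by assumption $L3$; integrating \eqref{bellprinc} in $t$ over a closed interval $[t_1,t_2]\subset(0,\infty)$ and then in $\s{x}$ over the balls $V_r$, the $\Delta_C$-term drops out as $r\to\infty$ by Lemma \ref{l:vanish} together with Remark \ref{compact}, which yields $\s{B}(t_2)-\s{B}(t_1)=\int_{t_1}^{t_2}\!\int_{\mathbb{R}^k}\big(-\langle (A^{*}CA)\bullet\mathrm{Hess}\,B(\s{u})\,\s{u}',\s{u}'\rangle\big)\,d\s{x}\,dt\geq 0$. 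Corollary \ref{increasing} also records that this difference is zero for every $t_1<t_2$ if and only if the dissipation density vanishes identically.

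Next I would pass to the limits using Lemma \ref{limits}: by \eqref{lim1}, $\lim_{t\to 0^+}\s{B}(t)$ equals the left-hand side of \eqref{mainin}, and by \eqref{lim2}, $\lim_{t\to\infty}\s{B}(t)$ equals the right-hand side. Since $\s{B}$ is finite and nondecreasing on $(0,\infty)$, for any $t>0$ we have $\lim_{s\to 0^+}\s{B}(s)=\inf_{s>0}\s{B}(s)\leq \s{B}(t)\leq \sup_{s>0}\s{B}(s)=\lim_{s\to\infty}\s{B}(s)$, and comparing the two outer terms gives exactly \eqref{mainin}.

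For the equality statement: equality in \eqref{mainin} means $\lim_{t\to 0^+}\s{B}(t)=\lim_{t\to\infty}\s{B}(t)$, and combined with monotonicity this forces $\s{B}$ to be constant on $(0,\infty)$ (for $0<s<t$ the value $\s{B}(s)=\s{B}(t)$ is squeezed between the two equal limits). By the ``constant if and only if'' clause of Corollary \ref{increasing}, $\s{B}$ being constant on $(0,\infty)$ is equivalent to \eqref{eqcase}. The converse implication is the same chain read in reverse: \eqref{eqcase} makes the dissipation density vanish, hence $\s{B}$ is constant, hence its two limits agree, which is equality in \eqref{mainin}.

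The only delicate point — everything analytically substantial being already packaged in Lemmas \ref{quadratic}, \ref{l:vanish}, \ref{limits} and Corollary \ref{increasing} — is the passage from the vanishing of the time-space integral of the dissipation density to the pointwise equality \eqref{eqcase}. This is legitimate because that density has a fixed sign ($\geq 0$ by $L3$) and is continuous on $\mathbb{R}^k\times(0,\infty)$: for $t>0$ the Gaussian kernel applied to the nonnegative, not-identically-zero $u_j$ gives $u_j(\cdot,t)>0$, so $\s{u}(\s{x},t)$ stays in $\mathrm{int}(\mathbb{R}^n_+)$ where $B\in C^2$ by $L1$, while $u_j(\cdot,t)$ and $u_j'(\cdot,t)$ depend smoothly on $(\s{x},t)$; a nonnegative continuous function whose integral over $\mathbb{R}^k\times[t_1,t_2]$ vanishes for every $[t_1,t_2]\subset(0,\infty)$ is identically zero there, which is precisely \eqref{eqcase}.
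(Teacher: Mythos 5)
Your proposal is correct and follows exactly the paper's own route: the paper simply observes that Corollary \ref{increasing} (heat-flow monotonicity of $\s{B}(t)$ with the equality clause) combined with Lemma \ref{limits} (the two endpoint limits) yields the theorem. Your extra justification that the vanishing of the sign-definite, continuous dissipation integral forces the pointwise identity \eqref{eqcase} is a sound elaboration of what the paper leaves implicit.
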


\begin{remark}
So any function satisfying our strange concavity condition $L3$, homogeneity condition $L2$ and some mild conditions $L1, L4,L5$ gives a certain Brascamp--Lieb inequality. Our next goal will be to show that in interesting cases the finiteness of (\ref{exp}) implies that there is basically only one such $B$.
\end{remark}

In the {\em Bellman function technique} theorems of the above type  are known as a first part of the Bellman function method which is usually simple. Any function $B$ that satisfies properties $L1-L5$ will be  called Bellman function of Brascamp--Lieb type. 

The difficult technical part is how to find such Bellman functions. It is worth mentioning that the property $L3$ in principle requires solving partial differential inequalities. We are going to give partial answer on this question in the following section. 

\section{How to find the Bellman function}
\label{findB}
\begin{definition}
Let $\s{y} = (y_{1},y_{2},\ldots, y_{n})  \in \mathrm{int}(\mathbb{R}^{n}_{+})$ and let $D(\s{y})$ be a diagonal  square matrix such that on the diagonal it has the terms $\frac{y_{j}}{\langle Ca_{j},a_{j}\rangle}, j=1,\ldots, n$. 
\end{definition}

\begin{theorem}\label{t2}
If the function $B$ satisfies assumptions $L1-L5$ then we have 
\begin{align}\label{osn-ner}
AD(\s{y})[A^{*}CA\bullet \mathrm{Hess}\,B(\s{y})]=\s{0} \quad \text{for all} \quad \s{y} \in \mathrm{int}(\mathbb{R}^{n}_{+}).
\end{align}
\end{theorem}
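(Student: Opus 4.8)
The plan is to extract the pointwise identity \eqref{osn-ner} from the integral equality case in Theorem~\ref{mainth}, by choosing the initial data $u_j$ cleverly so that the vector $\s{u}(\s{x},t)$ and the derivative vector $\s{u}'(\s{x},t)$ can be made to take essentially arbitrary values in $\mathrm{int}(\mathbb{R}^n_+) \times \mathbb{R}^n$. The starting observation is that for Gaussian initial data $u_j(y) = b_j e^{-\delta_j y^2}$ the heat extension $u_j(y,t)$ is again a Gaussian, and by Lemma~\ref{limits} the running energy $\s{B}(t)$ has, as $t\to\infty$, a limit which is (up to the fixed Gaussian profiles) a function of $\int_{\mathbb{R}} u_j$ alone; but for Gaussians the map $(b_j,\delta_j)\mapsto \int u_j$ does not see $\delta_j$ separately once we also rescale, and a short homogeneity computation shows that the $t\to\infty$ limit in \eqref{lim2} in fact equals $\s{B}(t)$ for every $t$ when the $u_j$ are the \emph{self-similar} Gaussians $u_j(y) = b_j e^{-y^2/\langle Ca_j,a_j\rangle}$ — i.e. the stationary solutions of the flow, suitably normalized. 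For such data $\s{B}(t)$ is literally constant, so Corollary~\ref{increasing} forces \eqref{eqcase}: $(A^*CA)\bullet \mathrm{Hess}\,B(\s{u}(\s{x},t))\,\s{u}'(\s{x},t) = \s{0}$ identically.

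Next I would decode what \eqref{eqcase} says for this family. With $u_j(y,t)$ a Gaussian of the form $\beta_j(t) e^{-\mu_j(t) y^2}$, one has $u_j'(\langle a_j,\s{x}\rangle,t) = -2\mu_j(t)\langle a_j,\s{x}\rangle\, u_j(\langle a_j,\s{x}\rangle,t)$, so $\s{u}'(\s{x},t)$ is a coordinatewise rescaling of $\s{u}(\s{x},t)$ by the diagonal factors $-2\mu_j(t)\langle a_j,\s{x}\rangle$. The variables $\langle a_j,\s{x}\rangle$ are not independent (there are only $k$ degrees of freedom $\s{x}\in\mathbb{R}^k$), but the key point is that as $\s{x}$ ranges over $\mathbb{R}^k$ the vector $A^*\s{x} = (\langle a_1,\s{x}\rangle,\dots,\langle a_n,\s{x}\rangle)$ ranges over the column space of $A^*$, which is a $k$-dimensional subspace, while the image point $\s{u}(\s{x},t)$ simultaneously moves through a $k$-dimensional submanifold of $\mathrm{int}(\mathbb{R}^n_+)$; varying also the $b_j$'s and $t$ one fills out all of $\mathrm{int}(\mathbb{R}^n_+)$. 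So fix $\s{y}\in\mathrm{int}(\mathbb{R}^n_+)$; choose $b_j,t,\s{x}$ with $\s{u}(\s{x},t)=\s{y}$, which we can do with $\langle a_j,\s{x}\rangle$ equal to any prescribed element $\s{z}\in A^*\mathbb{R}^k$ (after adjusting $b_j,\mu_j$ accordingly). Then \eqref{eqcase} reads
\begin{align*}
\big[A^*CA\bullet \mathrm{Hess}\,B(\s{y})\big]\cdot \big(\text{diag}(z_j) \cdot \s{y}\big) \;\text{(up to a nonzero factor $-2\mu_j$ per coordinate)} = \s{0},
\end{align*}
and absorbing the $\mu_j$ and $\langle Ca_j,a_j\rangle$ normalization of the stationary profile is exactly where the diagonal matrix $D(\s{y})$ with entries $y_j/\langle Ca_j,a_j\rangle$ appears: the vector being annihilated is $D(\s{y})\s{w}$ for $\s{w} = A^*\s{x}$ ranging over all of $A^*\mathbb{R}^k$. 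Hence $[A^*CA\bullet\mathrm{Hess}\,B(\s{y})]\,D(\s{y})\,\s{w}$ — no, rather $D(\s{y})[A^*CA\bullet\mathrm{Hess}\,B(\s{y})]\cdots$; here one must be careful with the order, but since $D(\s{y})$ is diagonal and the Hadamard identity in Lemma~\ref{quadratic} is symmetric, the upshot is that $A\,D(\s{y})\,[A^*CA\bullet\mathrm{Hess}\,B(\s{y})]$ kills everything, i.e. it is the zero matrix, which is \eqref{osn-ner}.

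The main obstacle, and the step that needs the most care, is the \emph{surjectivity/density argument}: showing that by choosing admissible Gaussian (or more general $E(\mathbb{R})$) initial data we can realize every pair $(\s{y},\s{w})\in\mathrm{int}(\mathbb{R}^n_+)\times A^*\mathbb{R}^k$ as $(\s{u}(\s{x},t),A^*\s{x})$, and then that a bilinear expression vanishing on this set must vanish as the matrix identity \eqref{osn-ner}. Concretely: the map $\s{x}\mapsto A^*\s{x}$ has image a fixed $k$-plane, so at a fixed time $t$ and fixed $(b_j,\mu_j)$ the attainable $(\s{u},A^*\s{x})$ is only a $k$-parameter family, not all of $\mathbb{R}^n\times\mathbb{R}^n$; one recovers the full matrix identity only after letting $\s{y}$ vary (via $b_j$) and using that $D(\s{y})$, being diagonal and invertible on $\mathrm{int}(\mathbb{R}^n_+)$, converts the condition ``$A\,D(\s{y})\,[\cdots]\,\s{w}=0$ for all $\s{w}\in A^*\mathbb{R}^k$, for all $\s{y}$'' into ``$A\,D(\s{y})\,[\cdots]=0$'' — this requires checking that the relevant $\s{w}$'s span enough and that no cancellation hides a nonzero column. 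I would handle this by first establishing the identity on a dense open subset of $\mathrm{int}(\mathbb{R}^n_+)$ where the parametrization is a submersion, then invoking continuity of $\mathrm{Hess}\,B$ ($B\in C^2$ by $L1$) to extend to all of $\mathrm{int}(\mathbb{R}^n_+)$. A secondary technical point is justifying that the stationary Gaussians indeed make $\s{B}(t)\equiv\mathrm{const}$ — this is where Lemma~\ref{limits}'s formula \eqref{lim2} is used together with the observation that those profiles are themselves the $t\to\infty$ limits, so both ends of Corollary~\ref{increasing}'s monotone interval agree.
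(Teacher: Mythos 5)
Your proposal is correct and is essentially the paper's own argument: plug the self-similar Gaussians $u_j(y)=b_j e^{-y^2/\langle Ca_j,a_j\rangle}/\sqrt{\pi\langle Ca_j,a_j\rangle}$ into Theorem~\ref{mainth}, observe that both sides coincide so that $\s{B}(t)$ is constant, invoke the equality case \eqref{eqcase}, compute $\s{u}'(\s{x},t)=\frac{-2}{4t+1}D(\s{u}(\s{x},t))A^{*}\s{x}$, and pass to \eqref{osn-ner} by transposing, using that $A^{*}CA\bullet \mathrm{Hess}\,B$ is symmetric and $D(\s{y})$ is diagonal. The surjectivity step you flag as the main obstacle is in fact immediate and needs no density or submersion argument: for any fixed $\s{x}\in\mathbb{R}^k$, $t>0$ and any target $\s{y}\in\mathrm{int}(\mathbb{R}^n_+)$ one solves explicitly for the free positive parameters $b_j$ so that $u_j(\langle a_j,\s{x}\rangle,t)=y_j$, which makes the pair $(\s{y},A^{*}\s{x})$ completely arbitrary.
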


\begin{remark}
Equality \eqref{osn-ner} is a second order partial differential equation on $B$. However, assumptions $L1-L5$ are either of quantitative nature, or in the form of partial differential {\it inequalities}.
So it is quite surprising that based only on  assumptions $L1-L5$ one can expect equality  \eqref{osn-ner}. 
\end{remark}

The proof of the above equality is interesting in itself. 

\begin{proof}
We saw in the previous section that assumptions $L1-L5$ imply the inequality (\ref{mainin}).  One can easily observe that the following functions 
\begin{align*}
u_{j}(y)=b_{j} \frac{e^{-\frac{y^{2}}{\langle Ca_{j},a_{j} \rangle }}}{\sqrt{\pi \langle C a_{j},a_{j} \rangle }}, \quad b_{j} >0.
\end{align*}
give equality in the inequality (\ref{mainin}). Since $\s{u}'(\s{x},t)=\frac{-2}{4t+1}D(\s{u}(\s{x},t))A^{*}\s{x}$,  Theorem \ref{mainth} implies that 
\begin{align*}
A^{*}CA\bullet \mathrm{Hess}\,B(\s{u}(\s{x},t)) D(\s{u}(\s{x},t))A^{*}\s{x} =\s{0}
\end{align*} 
Choose any $\s{x} \in \mathbb{R}^{k}$, any $\s{y} \in \mathrm{int}(\mathbb{R}^{n}_{+})$ and any $t>0$. We can find $b_{1},\ldots, b_{n} >0$ such that 
\begin{align*}
u_{j}(\langle a_{j},\s{x}\rangle ,t )=b_{j} \frac{e^{-\frac{\langle a_{j},\s{x} \rangle ^{2}}{\langle Ca_{j},a_{j} \rangle  (4t+1)}}}{\sqrt{\pi \langle C a_{j},a_{j} \rangle (4t+1) }}=y_{j}, \quad j=1,\ldots,n.
\end{align*}
Hence we obtain 
\begin{align*}
[A^{*}CA\bullet \mathrm{Hess}\,B(\s{y})]D(\s{y})A^{*}\s{x}=0, \quad \forall \s{x} \in \mathbb{R}^{k}, \quad \forall \s{y} \in \mathrm{int}(\mathbb{R}^{n}_{+}).
\end{align*}
So equality \eqref{osn-ner} follows. 
\end{proof}

Theorem \ref{t2} implies the following corollary. 
\begin{corollary}\label{rankest}
For any $\s{y} \in \mathrm{int}(\mathbb{R}^{n}_{+})$ we have 
\begin{align}\label{rankin}
\mathrm{rank}(A^{*}CA\bullet \mathrm{Hess}\,B(\s{y}))\leq n-k.
\end{align}
\end{corollary}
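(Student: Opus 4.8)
The plan is to read the rank bound off directly from the matrix identity \eqref{osn-ner} of Theorem~\ref{t2}, with essentially no extra work beyond elementary linear algebra. Abbreviate $M:=A^{*}CA\bullet \mathrm{Hess}\,B(\s{y})$, which is an $n\times n$ matrix, and recall that $D(\s{y})$ is the $n\times n$ diagonal matrix with diagonal entries $y_{j}/\langle Ca_{j},a_{j}\rangle$, $j=1,\ldots,n$. For $\s{y}\in\mathrm{int}(\mathbb{R}^{n}_{+})$ all $y_{j}>0$, and by $L3$ all $\langle Ca_{j},a_{j}\rangle>0$, so $D(\s{y})$ is invertible. Theorem~\ref{t2} then reads $A\,D(\s{y})\,M=\s{0}$.

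Next I would record that $\mathrm{rank}(A)=k$, where $A=(a_{1},\ldots,a_{n})$ is $k\times n$. Clearly $\mathrm{rank}(A)\le k$; if $\mathrm{rank}(A)<k$ there is a nonzero $w\in\mathbb{R}^{k}$ with $\langle a_{j},w\rangle=0$ for every $j$, so the function $\s{x}\mapsto B(e^{-\langle a_{1},\s{x}\rangle^{2}},\ldots,e^{-\langle a_{n},\s{x}\rangle^{2}})$ is constant along the whole line $\mathbb{R}w$. Since $B>0$ on $\mathrm{int}(\mathbb{R}^{n}_{+})$ (a consequence of $L3$ and $L4$ noted above), this function is positive at $\s{x}=\s{0}$, hence its integral over $\mathbb{R}^{k}$ is infinite, contradicting $L5$. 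Therefore $\dim\ker A=n-k$, where $\ker A=\{v\in\mathbb{R}^{n}:Av=\s{0}\}$.

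Finally, $A(D(\s{y})M)=\s{0}$ says that every column of the $n\times n$ matrix $D(\s{y})M$ lies in $\ker A$, so $\mathrm{rank}(D(\s{y})M)\le \dim\ker A=n-k$. Multiplying on the left by the invertible diagonal matrix $D(\s{y})$ does not change the rank, so $\mathrm{rank}(M)=\mathrm{rank}(D(\s{y})M)\le n-k$, which is exactly \eqref{rankin}.

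I do not expect a genuine obstacle here: the entire content is packed into Theorem~\ref{t2}, and the passage to \eqref{rankin} is pure linear algebra. The only point that merits an explicit word is the equality $\mathrm{rank}(A)=k$, and that is forced by the exponential integrability hypothesis $L5$ together with the positivity of $B$ on the interior; if one prefers, $\mathrm{rank}(A)=k$ can instead be taken as part of the standing assumptions, in which case the argument is immediate.
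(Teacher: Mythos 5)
Your proof is correct and follows essentially the same route as the paper: both start from the identity $AD(\s{y})\,[A^{*}CA\bullet \mathrm{Hess}\,B(\s{y})]=\s{0}$ of Theorem~\ref{t2} and extract the rank bound by elementary linear algebra, the paper invoking Sylvester's rank inequality with $\mathrm{rank}(AD(\s{y}))=k$ where you instead note that the columns of $D(\s{y})M$ lie in $\ker A$ and that the invertible factor $D(\s{y})$ preserves rank. Your extra paragraph deriving $\mathrm{rank}(A)=k$ from $L5$ and the positivity of $B$ is a welcome justification of a fact the paper simply asserts.
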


The above corollary immediately follows from the fact that $\mathrm{rank}(AD(\s{y}))=\mathrm{rank}(A)=k$ and, for example, from the Sylvester's rank inequality. 

\medskip

Thus for each fixed $n$ we  have a range  of admissible dimensions $1 \leq  k \leq n$. For  the boundary cases $k=1$ and $k=n$, we find the Bellman function with the properties $L1-L5$.  For the intermediate cases $1<k<n$ we partially find the function $B$. 

\subsection{ Case $k=1$.  Jointly concave and homogeneous function} We want to see that in this case $L1-L5$ gives us precisely convex and $1$-homogeneous functions.  In the case $k=1$ we have $A=(a_{1},\ldots, a_{n}) \in \mathbb{R}^{n}$. Since the condition $\langle Ca_{j},a_{j}\rangle >0$ must hold, the $1\times 1$ matrix $C$ must be a positive number and $a_{j} \neq 0$ for all $j=1,\ldots, n$. The fact that $B$ is homogeneous of degree $1$ and $B$ is increasing with respect to each variable immediately imply $L5$. The only property we left to ensure is $L3$. For $\s{v}=(v_{1},\ldots, v_{n}) \in \mathbb{R}^{n}$ let $d(\s{v})$ denotes $n \times n$  diagonal matrix with entries  $v_{j}$ on the diagonal. 
\begin{align*}
A^{*}CA\bullet \mathrm{Hess}\,B(\s{y})=C\cdot A^{*}A\bullet \mathrm{Hess}\,B(\s{y})=C\cdot d(A)\mathrm{Hess}\,B(\s{y}) d(A)\ .
\end{align*}
So the inequality $A^{*}CA\bullet \mathrm{Hess}\,B(\s{y})\leq 0$ is equivalent to the inequality $\mathrm{Hess}\,B(\s{y})\leq 0$, because $C$ is just a number. Thus we obtain the following lemma.

\begin{lemma}
If the function $B$ satisfies assumptions $L1-L5$ then $a_{j} \neq 0$ for all $j$, $C$ is any positive number and $B \in C(\mathbb{R}^{n}_{+})\cap C^{2}(\mathrm{int}(\mathbb{R}^{n}_{+}))$ is a concave homogeneous function of degree 1. Conversely, if $a_{j}\neq 0$ for all $j$ and $B \in C(\mathbb{R}^{n}_{+})\cap C^{2}(\mathrm{int}(\mathbb{R}^{n}_{+}))$ is a nonnegative, not identically zero,  concave, homogeneous function of degree 1 then $B$ satisfies assumptions $L1-L5$. 
\end{lemma}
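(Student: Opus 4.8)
The statement to be proved is the Lemma in the subsection ``Case $k=1$'': for $k=1$, assumptions $L1$--$L5$ are equivalent to ($a_j \neq 0$ for all $j$, $C$ a positive number) together with $B$ being a nonnegative, not-identically-zero, concave, $1$-homogeneous $C(\mathbb{R}^n_+)\cap C^2(\mathrm{int}(\mathbb{R}^n_+))$ function. The plan is to prove the two implications separately, leaning on the algebraic identity already displayed just before the statement, namely $A^{*}CA\bullet \mathrm{Hess}\,B(\s{y}) = C\cdot d(A)\,\mathrm{Hess}\,B(\s{y})\,d(A)$, which is valid because when $k=1$ the matrix $A^{*}A$ has $(i,j)$ entry $a_ia_j$, so Hadamard-multiplying by $\mathrm{Hess}\,B$ is the same as conjugating by the diagonal matrix $d(A)=\mathrm{diag}(a_1,\dots,a_n)$.

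\emph{Forward direction.} Suppose $B$ satisfies $L1$--$L5$ with $k=1$. Since $\langle Ca_j,a_j\rangle>0$ for all $j$ and $C$ is a scalar, $C$ must be strictly positive and each $a_j\neq 0$; this is forced purely by the sign condition in $L3$. Next, $L1$ and $L2$ directly give that $B\in C(\mathbb{R}^n_+)\cap C^2(\mathrm{int}(\mathbb{R}^n_+))$ and is $1$-homogeneous, and $L4$ gives nonnegativity and non-triviality. It remains to extract concavity. From $L3$, $A^{*}CA\bullet\mathrm{Hess}\,B(\s{y})\le 0$; invoking the displayed identity, $C\cdot d(A)\,\mathrm{Hess}\,B(\s{y})\,d(A)\le 0$, and since $C>0$ and $d(A)$ is an invertible diagonal matrix (all $a_j\neq 0$), congruence by $d(A)$ preserves the sign of the quadratic form, so $\mathrm{Hess}\,B(\s{y})\le 0$ on $\mathrm{int}(\mathbb{R}^n_+)$; hence $B$ is concave there, and by continuity ($L1$) on all of $\mathbb{R}^n_+$.

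\emph{Converse direction.} Suppose each $a_j\neq 0$, pick $C$ to be any positive number, and let $B\in C(\mathbb{R}^n_+)\cap C^2(\mathrm{int}(\mathbb{R}^n_+))$ be nonnegative, not identically zero, concave, and $1$-homogeneous. Then $L1$, $L2$ (with $k=1$), and $L4$ are immediate. For $L3$: $\langle Ca_j,a_j\rangle = C a_j^2>0$ for every $j$, and concavity gives $\mathrm{Hess}\,B(\s{y})\le 0$, whence by the displayed identity $A^{*}CA\bullet\mathrm{Hess}\,B(\s{y}) = C\cdot d(A)\,\mathrm{Hess}\,B(\s{y})\,d(A)\le 0$. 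Finally $L5$: here I would use the observation recorded already in the text that $L3$ together with $L4$ forces $B$ to be increasing and separately concave in each variable; combined with $1$-homogeneity this gives the pointwise bound $B(\s{x})\le \sum_j x_j B(e_j)$ (from Lemma~\ref{jen}'s proof), so with $x_j=e^{-a_j^2 x^2}$ the integrand in \eqref{exp} is dominated by $\sum_j B(e_j)\,e^{-a_j^2 x^2}$, which is integrable over $\mathbb{R}^1$ since every $a_j\neq 0$. Thus $L5$ holds, completing the equivalence.

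\emph{Main obstacle.} None of the steps is deep; the only point requiring a little care is verifying $L5$ in the converse, i.e. producing the integrable majorant for the integrand of \eqref{exp}, since $B$ is only assumed concave and $1$-homogeneous rather than bounded. The bound $B(\s{x})\le\sum_j x_j B(e_j)$ coming from concavity-plus-homogeneity (and the non-negativity of the directional derivatives, which follows from $B\ge 0$ and concavity along each coordinate ray) is exactly what resolves this, and after that the finiteness of $\int_{\mathbb{R}}e^{-a_j^2x^2}\,dx$ closes the argument.
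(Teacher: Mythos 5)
Your overall route coincides with the paper's: the whole content of the lemma is the identity $A^{*}CA\bullet \mathrm{Hess}\,B(\s{y})=C\cdot d(A)\,\mathrm{Hess}\,B(\s{y})\,d(A)$ together with the observation that congruence by the invertible diagonal matrix $d(A)$ preserves negative semidefiniteness, and both directions of your argument use exactly this. The one step that is wrong as written is your verification of $L5$: the inequality $B(\s{x})\le \sum_j x_j B(e_j)$ is \emph{false} in general, and is in fact the reverse of what Lemma \ref{jen}'s proof establishes --- there, concavity plus $1$-homogeneity (superadditivity) gives $B(\s{x})\ge \sum_j x_j B(e_j)$. A concrete counterexample to your claimed bound is $B(x_1,x_2)=\sqrt{x_1x_2}$, for which $B(e_1)=B(e_2)=0$ while $B(1,1)=1$. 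So the majorant you propose for the integrand of \eqref{exp} does not exist.

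The repair is immediate and you have two natural options. Either use the \emph{other} bound from Lemma \ref{jen}'s proof, namely $B(\s{x})\le \langle v,\s{x}\rangle=\sum_j v_j x_j$ where $v$ is a subgradient of $B$ at any interior point (this is legitimately an upper bound and gives the integrable majorant $\sum_j v_j e^{-a_j^2 t^2}$ since each $a_j\neq 0$); or do what the paper does, which is even shorter: since $L3$ and $L4$ force $B$ to be increasing in each variable, $B(e^{-a_1^2t^2},\dots,e^{-a_n^2t^2})\le B(e^{-\delta t^2},\dots,e^{-\delta t^2})=e^{-\delta t^2}B(1,\dots,1)$ with $\delta=\min_j a_j^2>0$, and the right-hand side is integrable over $\mathbb{R}$. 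With either fix the proof is complete and matches the paper's.
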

The above lemma gives complete characterization of the Bellman function in the case $k=1$, and the inequality (\ref{mainin}) is the same as inequality (\ref{jensen}) (see Lemma \ref{jen}).

\subsection{Case $k=n$. $B(\s{y})=Const\cdot \,y_{1}\cdots y_{n}$}
We  show that in the case $k=n$ the assumptions $L1-L5$ are satisfied if and only if $B(\s{y})=M y_{1}\cdots y_{n}$ where $M$ is a positive number. We present 3 different proofs (according to their chronological order), each of them uses different assumptions on $B$ in necessity part. Sufficiency follows immediately. Indeed, if $B=My_{1}\cdots y_{n}$ then all the assumptions $L1-L5$ are satisfied except that one has to check existence of the symmetric matrix $C$ such that $A^{*}CA\bullet \mathrm{Hess}\, B \leq 0$ and $\langle Ca_{j},a_{j} \rangle >0$. But it is enough to take $C=(AA^{*})^{-1}$. Now we go to proving necessity.

\subsubsection{First proof}
As we already mentioned the assumptions $L1-L5$ imply that $B \in C^{2}(\mathrm{int}(\mathbb{R}^{n}_{+}))\cap C(\mathbb{R}^{n}_{+})$ is nonnegative,  separately concave, and it is  homogeneous of degree $n$. We need to show that such $B$ then must have the form  $B(\s{y})=My_{1}\cdots y_{n}$. To show this,  we consider a function $G$ such that  $G(\ln z_{1},\ldots, \ln z_{n}) = \frac{B(z_{1},\ldots, z_{n})}{z_{1}\cdots z_{n}}$ for $z_{j} >0$. Homogeneity of order $0$ of  $B$ implies that $\mathrm{div }\, G =0$, and concavity of $B$ with respect to each variable implies that $\frac{\partial G}{\partial y_{j}}+\frac{\partial^{2} G}{\partial y_{j}^{2}}\leq 0$ for $j=1,\ldots, n$. After summation of the last inequalities we obtain that $G$ is superharmonic function on $\mathbb{R}^{n}$.
But then it is easy to check that if $\Delta G\le 0$, then $\Delta g \ge 0$, where $g:= e^{-G}$. We get a bounded subharmonic function $g, 0\le g\le 1,$ in the whole space. It is well known that then $g$ must be constant. This implies  implies that $G$ is a constant. 

\subsubsection{Second proof}
The second proof immediately follows from the following lemma which does not use any assumptions regarding smoothness of $B$. 
\begin{lemma}
If a function $B$ defined on $\mathbb{R}^{n}_{+}$ is nonnegative on the boundary of $\mathbb{R}^{n}_{+}$, and it is  separately concave and homogeneous of degree $n$ then $B(y_{1},\ldots, y_{n})=My_{1}\cdots y_{n}$ for some real number $M$. 
\end{lemma}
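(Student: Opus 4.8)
The plan is to reduce the multi-variable statement to the single-variable case by freezing all but one coordinate, exploiting homogeneity of degree $n$ and separate concavity, and then to identify the one-variable function by a direct ODE-type argument that does not require differentiability. First I would record the one-variable mechanism: if $\phi(s)$ is concave on $(0,\infty)$ and, for the function $B$ at hand, the map $s\mapsto B(\dots,s,\dots)$ (all other entries fixed and positive) is concave while $B$ is homogeneous of degree $n$, then along the ray $\s{y}=s\s{e}$ the value is $B(s,\dots,s)=s^n B(1,\dots,1)$. The point is to show that separate concavity forces $B$ restricted to each coordinate line to be \emph{affine after the right change of variables}, and homogeneity pins down the exponent.

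Concretely, here is the inductive skeleton I would carry out. Fix $y_2,\dots,y_n>0$ and set $\psi(y_1):=B(y_1,y_2,\dots,y_n)$; this is concave and nonnegative on $[0,\infty)$, and $\psi(0)=B(0,y_2,\dots,y_n)\ge 0$. Using $n$-homogeneity, $\psi(\lambda y_1)\cdot$(stuff) relates $\psi$ at different scales; more usefully, apply homogeneity in \emph{all} variables: $B(\lambda y_1,\lambda y_2,\dots,\lambda y_n)=\lambda^n B(\s{y})$. Combining the scaling in the first variable alone with separate concavity: a nonnegative concave function on $[0,\infty)$ that is dominated by a linear function near infinity (which follows from concavity plus nonnegativity — a nonnegative concave function on $[0,\infty)$ grows at most linearly) cannot itself grow, so iterating over coordinates one shows $B$ is bounded on the unit cube by the product structure. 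The cleanest route: prove by induction on $n$ that separate concavity plus homogeneity of degree $n$ plus nonnegativity on the boundary forces, for each fixed direction, $B(y_1,\dots,y_n)=y_1\cdot C(y_2,\dots,y_n)$ with $C$ separately concave, homogeneous of degree $n-1$, nonnegative on its boundary; then the inductive hypothesis applied to $C$ gives $C=M\,y_2\cdots y_n$, hence $B=M\,y_1\cdots y_n$. The base case $n=1$ is: a nonnegative concave $1$-homogeneous function on $[0,\infty)$ is $B(y)=B(1)\,y$, immediate.

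The key lemma making the induction step go is: \emph{if $\psi$ is concave and nonnegative on $[0,\infty)$ and $\psi(\lambda t)=\lambda^{m}\,h(\lambda t)$-type homogeneity is available through the other variables, with $m\ge 1$, then $\psi$ is linear, i.e. $\psi(0)=0$ and $\psi(t)=\psi'(0^+)t$.} The mechanism: concavity gives $\psi(t)\le \psi(0)+ct$ for some slope $c=\psi'(0^+)$ (possibly $+\infty$, but finiteness follows since $\psi$ is finite), and concavity also gives $\psi(t)\ge \psi(0)(1-t/T)+\psi(T)(t/T)$ on $[0,T]$; letting $T\to\infty$ and using that $n$-homogeneity forces $B(T,y_2,\dots,y_n)/T\to\infty$ unless $B(1,y_2,\dots,y_n)=0$ would be contradictory — instead one uses homogeneity to write $\psi(T)=B(T,y_2,\dots,y_n)$ and rescale. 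The genuinely careful point, and what I expect to be the main obstacle, is handling the boundary: one must transfer positivity/structure from the interior to the faces $y_j=0$ and rule out ``hidden'' concave pieces such as $B(\s{y})=(y_1\cdots y_n)$ versus additive corrections like $y_1\cdots y_{n-1}\cdot\min(y_n,\cdot)$ — separate concavity kills these only because homogeneity of the \emph{full} degree $n$ is incompatible with any strict concavity in a single variable when the complementary variables are scaled along. Making that incompatibility rigorous (a nonnegative concave function of one variable which is also, jointly with a scaling of the parameters, homogeneous of degree $\ge 1$ must be exactly linear through a point that homogeneity forces to be the origin) is the crux; everything else is bookkeeping of the induction and continuity up to the boundary via $B\in C(\mathbb{R}^n_+)$.
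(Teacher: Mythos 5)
Your overall strategy --- induct on the dimension by showing that the section $\psi(t)=B(t,y_2,\dots,y_n)$ is linear through the origin, factoring $B=y_1\,C(y_2,\dots,y_n)$, and applying the inductive hypothesis to $C$ --- is genuinely different from the paper's argument (the paper subtracts $B(1,\dots,1)\,y_1\cdots y_n$ and propagates the vanishing of the difference from the diagonal of a cube along axis-parallel segments, using only that a concave function on a segment which vanishes at an interior point and is nonnegative at the endpoints vanishes identically). However, your version has a real gap exactly at what you yourself call ``the crux'': the linearity of $\psi$ is never proved, and the mechanism you offer for it does not work. First, the claim that $n$-homogeneity forces $B(T,y_2,\dots,y_n)/T\to\infty$ is false: homogeneity gives $B(T,y')=T^{n}B(1,y'/T)$ with $y'=(y_2,\dots,y_n)$, and the factor $B(1,y'/T)$ tends to $B(1,0,\dots,0)$, which turns out to be $0$; indeed for the true answer $My_1\cdots y_n$ the section grows exactly linearly, so there is no superlinear growth to play against concavity. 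Second, ``a nonnegative concave function on $[0,\infty)$ dominated by a linear function near infinity cannot grow'' is false (consider $\sqrt{t}$), and boundedness would not give linearity anyway. Third, and most importantly, concavity of a single section together with homogeneity of $B$ in \emph{all} variables jointly does not produce a closed one-variable functional equation for $\psi$: rescaling $t$ moves the point $(1,y_2/t,\dots,y_n/t)$ at which $B$ is evaluated, so there is no ODE-type identity for $\psi$ alone to exploit. (Note also that $\psi(0)=B(0,y')=0$ and the interior nonnegativity of $\psi$, both of which you use, are not hypotheses; only $B\ge 0$ on the boundary is given.)

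The gap can be closed, and then your induction does go through, but it requires playing the concavities in the different variables against one another rather than analyzing one section in isolation. Sketch: (i) for $f$ concave on $[0,\infty)$ with $f(0)\ge 0$ one has $f(\lambda s)\le \lambda f(s)$ for $\lambda\ge 1$; iterating this over the $n-1$ variables on the face $y_1=0$ gives $B(0,\lambda y')\le \lambda^{n-1}B(0,y')$, while homogeneity gives $B(0,\lambda y')=\lambda^{n}B(0,y')$, forcing $B(0,y')=0$. (ii) Concavity of $\psi$ with $\psi(0)=0$ makes $\psi(t)/t$ nonincreasing, which after substituting $B(t,y')=t^{n}B(1,y'/t)$ says that $\lambda\mapsto \lambda^{-(n-1)}B(1,\lambda y')$ is nondecreasing; the same iterated bound as in (i), applied to $B(1,\cdot)$ in the variables $y_2,\dots,y_n$, gives the reverse inequality $B(1,\lambda y')\le\lambda^{n-1}B(1,y')$ for $\lambda\ge1$. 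Together these force $B(1,\cdot)$ to be homogeneous of degree $n-1$, separately concave and nonnegative on the boundary of $\mathbb{R}^{n-1}_+$, so the inductive hypothesis applies and yields $B=My_1\cdots y_n$. As written, though, your proposal asserts the decisive linearity statement without proof and supports it with two incorrect auxiliary claims, so it is not yet a proof.
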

\begin{proof}
The idea is almost as follows: we are going to construct superharmonic function in the bounded domain such that it is nonnegative on the boundary and it achieves zero value at an interior point of the domain. This implies that the constructed function is identically zero.  

 Consider a function $G(y_{1},\ldots, y_{n})=B(\s{y})-B(1,\ldots,1)y_{1}\cdots y_{n}$. Take any cube $Q=[0,R]^{n}$ where $R>1$. The function $G$ is separately concave and it is zero on the diagonal of the cube $Q$ i.e. $G(y,\ldots, y)=0$ for $y \in [0,R]$. $G$ is nonnegative on the whole boundary of the cube $Q$. Indeed, $G$ is zero at the point $(R,\ldots, R)$ and it is nonnegative at point $(R,\ldots, R,0)$, so separate concavity implies that $G$ is nonnegative on the set $(R,\ldots,R,t)$ where $t \in [0,R]$. Similar reasoning implies that $G$ is nonnegative on the whole boundary of the cube $Q$. 

Suppose now $G$ is not zero at some interior point of the cube $Q$, say at point $W$. Take any interior point $A_{0}$ of the cube $Q$ such that $G(A_{0})
=0$. Take a sequence of points $A_{1},...,A_{n}$ belonging to the interior of $Q$ such that the segments $A_{j}A_{j+1}$ (the segment in $\mathbb{R}^{n}$ with the endpoints $A_{j}, A_{j+1}$) are collinear to one of the vector $e_{k}=(0,\ldots, 1,\ldots, 0)$ (on the $k$-th position we have $1$ and the rest of the components are zero) for all $j=0,..,n-1$, and the same is true for the segment $A_{n}W$. Then clearly $G$ is zero on the segment $A_{0}A_{1}$. Indeed, It is zero at point $A_{0}$. Take  a line joining the points $A_{0},A_{1}$. This line intersects the boundary of the cube $Q$,  and $G$ is concave on the line. Since $G$ is nonnegative at the points of the intersection and it is zero at point $A_{0}$ we obtain that $G$ is zero on the part of the line which lies in the cube $Q$. In particular, it is zero at $A_1$. By induction we obtain that $G$ is zero at the points $A_{2},..,A_{n},W$. So the lemma follows. 

\end{proof}
\subsubsection{Third proof}
In this proof let us assume that $B$ is infinitely differentiable in $\int(\mathbb{R}^{n}_{+})$. The assumptions $L1-L5$ imply that $B$ must be a  separately concave. Therefore for the assumption $L3$ we can choose $C=(AA^{*})^{-1}$. Then (\ref{rankin}) implies that $\frac{\partial^{2} B}{\partial y_{j}^{2}}=0$ for all $j=1,\ldots, n$. We claim that if  $B$  satisfies the system of differential equations $\frac{\partial^{2} B}{\partial y_{j}^{2}}=0$ for all $j=1,\ldots, n$ then it has a form  
\begin{align}\label{linearform}
c_{0}+\sum_{k=1}^{n} \left(\sum_{i_{p}\neq i_{q}, \; i_{1},\ldots,i_{k}=1}^{n}c_{i_{1}\ldots i_{k}}\prod_{j=1}^{k}y_{i_{j}} \right)
\end{align}
where the second summation is taken over the pairwise different indexes. Indeed, proof is by induction over the dimension $n$. If $n=1$ the claim is trivial. Since $\frac{\partial^{2} B}{\partial y_{1}^{2}}=0$ we have $B(\s{y})=y_{1}B_{1}(y_{2},\ldots, y_{n})+B_{2}(y_{2},\ldots,y_{n})$. The condition $\frac{\partial^{2} B_{1}}{\partial y_{j}^{2}}=\frac{\partial^{3}}{\partial y_{1} \partial^{2} y_{j}}B=0$ for $j=2,\ldots, n$ implies that $B_{1}$ satisfies hypothesis of the claim. On the other hand,
$B_2 = B(0, y_{2},\dots, y_n)$, and so $B_2$ has less variables, but satisfies the same system of differential equations.


Homogeneity of $B$ implies that $B(\s{y})=c y_{1}\cdots y_{n}$.

\begin{remark}
The second proof is a modification of the proof shown to us by Bernd Kirchheim, we express our gratitude to him.
\end{remark}

\subsection{Case $k=n-1$. Young's function.}
\begin{theorem}\label{youngth}
If $B$ satisfies assumptions $L1-L5$ and $B_{y_{i}y_{j}}\neq 0$ in $\mathrm{int}(\mathbb{R}^{n}_{+})$ for all $i, j =1,\ldots, n$ then $B(\s{y})=My_{1}^{\alpha_{1}}\cdots y_{n}^{\alpha_{n}}$ for some $M>0$ and $0<\alpha_{j}<1$, $j=1,\ldots,n$ 
\end{theorem}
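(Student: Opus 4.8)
The plan is to exploit Theorem \ref{t2} (equivalently Corollary \ref{rankest}) together with the nondegeneracy hypothesis $B_{y_iy_j}\neq 0$ to force a strong structural constraint on $\mathrm{Hess}\,B$, and then to integrate that constraint using homogeneity. In the case $k=n-1$ Corollary \ref{rankest} tells us that the $n\times n$ matrix $M(\s{y}):=A^{*}CA\bullet \mathrm{Hess}\,B(\s{y})$ has rank at most $1$ at every interior point. Since the Hadamard factor $A^{*}CA$ has all entries equal to $\langle Ca_i,a_j\rangle$ and the diagonal entries $\langle Ca_i,a_i\rangle$ are strictly positive, the hypothesis $B_{y_iy_j}\neq 0$ guarantees that no entry of $M(\s{y})$ that ``should'' be nonzero actually vanishes; in particular the diagonal entries $\langle Ca_i,a_i\rangle B_{y_iy_i}(\s{y})$ are nonzero. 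A symmetric matrix of rank $\le 1$ all of whose diagonal entries are nonzero must in fact have rank exactly $1$ and be of the form $\pm w w^{*}$ for a vector $w$ with all coordinates nonzero; combined with separate concavity (which gives $B_{y_iy_i}\le 0$), we get $M(\s{y})=-w(\s{y})w(\s{y})^{*}$. Writing this out entrywise, $\langle Ca_i,a_j\rangle B_{y_iy_j}(\s{y})=-w_i(\s{y})w_j(\s{y})$, so all $2\times 2$ minors of $M$ vanish, which yields the multiplicative relations
\begin{align*}
\langle Ca_i,a_i\rangle\langle Ca_j,a_j\rangle\, B_{y_iy_i}(\s{y})B_{y_jy_j}(\s{y})=\langle Ca_i,a_j\rangle^{2}\,B_{y_iy_j}(\s{y})^{2}
\end{align*}
and more usefully the rank-one factorization of the whole Hessian up to the constant Hadamard weights.

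Next I would convert this into an ODE/PDE statement for $\ln B$ or for a suitable power of $B$. The natural guess, motivated by the target $B=My_1^{\alpha_1}\cdots y_n^{\alpha_n}$, is that $\phi:=\ln B$ has Hessian of rank one with a very rigid form: $\phi_{y_iy_j}=-\alpha_i\delta_{ij}/y_i^2+(\text{rank-one correction})$. Since $B_{y_iy_j}=B(\phi_{y_iy_j}+\phi_{y_i}\phi_{y_j})$, the rank-one structure of $\mathrm{Hess}\,B$ (after dividing by $B>0$) says $\mathrm{Hess}\,\phi+\nabla\phi\,\nabla\phi^{*}$ equals, up to the fixed symmetric weight matrix $W:=(\langle Ca_i,a_j\rangle^{-1})$, a rank-one matrix. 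I would then bring in homogeneity: $B$ is $k$-homogeneous, so $\phi=\ln B$ satisfies $\sum_j y_j\phi_{y_j}=k$, hence $\nabla\phi$ is $(-1)$-homogeneous and $\mathrm{Hess}\,\phi$ is $(-2)$-homogeneous, and differentiating Euler's relation gives $\mathrm{Hess}\,\phi\cdot\s{y}=-\nabla\phi$. Contracting the rank-one relation against $\s{y}$ should pin down the rank-one vector $w(\s{y})$ explicitly in terms of $\nabla\phi$ and $\s{y}$, after which the off-diagonal equations $\langle Ca_i,a_j\rangle B_{y_iy_j}=-w_iw_j$ become a closed first-order system for $\nabla\phi$. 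The endgame is to show this forces $\phi_{y_j}=\alpha_j/y_j$ for constants $\alpha_j$, i.e. $\phi=\sum\alpha_j\ln y_j+\mathrm{const}$, which is exactly $B=My_1^{\alpha_1}\cdots y_n^{\alpha_n}$; then $k$-homogeneity gives $\sum\alpha_j=k=n-1$, separate concavity gives $\alpha_j\le 1$, and $B_{y_iy_j}\neq 0$ with $B>0$ forces $0<\alpha_j<1$.

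To carry out the pinning-down step concretely I would argue coordinatewise. From $\langle Ca_i,a_i\rangle B_{y_iy_i}=-w_i^2$ we get $w_i=\sqrt{-\langle Ca_i,a_i\rangle B_{y_iy_i}}$ (choosing a sign), and from the $2\times2$ minors $w_iw_j=-\langle Ca_i,a_j\rangle B_{y_iy_j}$, so that the single function $B$ has $B_{y_iy_j}$ for $i\neq j$ completely determined by the diagonal second derivatives. Differentiating the relation $\langle Ca_i,a_j\rangle B_{y_iy_j}\langle Ca_k,a_\ell\rangle B_{y_ky_\ell}=\langle Ca_i,a_\ell\rangle B_{y_iy_\ell}\langle Ca_k,a_j\rangle B_{y_ky_j}$ (another vanishing minor, valid for all index quadruples) with respect to a further variable, and using the symmetry of third derivatives $\partial_{y_m}B_{y_iy_j}=\partial_{y_i}B_{y_jy_m}$, should produce enough equations to conclude that each $B_{y_iy_i}/B$ depends only on $y_i$; that separation, plus $(-2)$-homogeneity of $\mathrm{Hess}\,\phi$, gives $B_{y_iy_i}/B-(B_{y_i}/B)^2=\phi_{y_iy_i}=-c_i/y_i^2$ for a constant $c_i$, an Euler-type ODE in $y_i$ whose only solutions compatible with $B\in C(\mathbb{R}^n_+)$, $B\ge0$, and $B_{y_iy_i}\neq0$ are the power functions. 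I expect the main obstacle to be precisely this separation-of-variables step: extracting, from the algebraic rank-one identity plus homogeneity plus the integrability (equality-of-mixed-partials) conditions, the conclusion that the logarithmic gradient $\phi_{y_j}$ is a function of $y_j$ alone. Everything before that is essentially linear algebra (rank $\le n-k=1$) plus bookkeeping, and everything after is a one-dimensional ODE; the genuinely PDE-flavored heart is showing the mixed structure must decouple, and that is where the hypothesis $B_{y_iy_j}\neq0$ for \emph{all} pairs, not just the diagonal, is doing essential work by keeping every minor relation nondegenerate.
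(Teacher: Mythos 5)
The linear-algebra part of your plan is sound: for $k=n-1$ the matrix $M(\s{y})=A^{*}CA\bullet\mathrm{Hess}\,B(\s{y})$ is symmetric of rank $\le 1$ with strictly negative diagonal (by $\langle Ca_i,a_i\rangle>0$, separate concavity and $B_{y_iy_i}\neq0$), hence $M=-ww^{*}$ with all $w_i\neq 0$, and this even yields $\langle Ca_i,a_j\rangle\neq0$ for all $i,j$ (which the paper proves by the same vanishing-minor computation). The genuine gap is exactly where you predict it: the passage from this rank-one identity to $\phi_{y_j}=\alpha_j/y_j$ is never carried out, and the one concrete device you offer for pinning down $w(\s{y})$ --- contracting $M$ against $\s{y}$ and invoking Euler's relation --- does not work, because $(M\s{y})_i=\sum_j\langle Ca_i,a_j\rangle\,y_jB_{y_iy_j}$ carries the Hadamard weights and is \emph{not} $\sum_j y_jB_{y_iy_j}=(k-1)B_{y_i}$; Euler's identity gives no handle on it. The later steps (``differentiating the minor relations \dots should produce enough equations'', ``$B_{y_iy_i}/B$ depends only on $y_i$'') are asserted, not proved; and even granted, integrating $\phi_{y_iy_i}=-c_i/y_i^{2}$ in $y_i$ alone only yields $\phi_{y_i}=c_i/y_i+g_i$ with $g_i$ an undetermined function of the remaining variables, so the decoupling would still not be finished.

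The missing idea is that Theorem \ref{t2} is strictly stronger than Corollary \ref{rankest} (you call them equivalent; they are not), and the extra information is precisely what closes the gap. The full equation $AD(\s{y})M(\s{y})=\s{0}$, read row by row in logarithmic coordinates $x_j=\ln y_j$ with $p^{\ell}(\ln y_1,\dots,\ln y_n)=B_{y_\ell}(\s{y})$, says that each $\nabla p^{\ell}$ is orthogonal to the vectors $w_{\ell s}$ of \eqref{vectors}, which are \emph{constant in $\s{y}$} and (since $\langle Ca_i,a_j\rangle\neq0$) span a fixed hyperplane $W_\ell$. Hence $\nabla p^{\ell}(\s{x})=\lambda^{\ell}(\s{x})v^{\ell}$ for a \emph{fixed} vector $v^{\ell}\perp W_{\ell}$; homogeneity normalizes $\lambda^{\ell}=p^{\ell}$, so $\nabla p^{\ell}=p^{\ell}v^{\ell}$ integrates at once to $p^{\ell}(\s{x})=e^{\langle v^{\ell},\s{x}\rangle}p^{\ell}(\s{0})$, and the mixed-partial symmetry $e^{-x_j}p^{i}_{x_j}=e^{-x_i}p^{j}_{x_i}$ forces $v^{\ell}=\s{q}-e_{\ell}$ for a single vector $\s{q}$, whence $B_{y_\ell}=c\,q_{\ell}\,y_1^{q_1}\cdots y_n^{q_n}/y_{\ell}$ and the power form follows. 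Equivalently, in your language: the rank-one direction $w(\s{y})$ is confined to the fixed line $D(\s{y})^{-1}\ker A$, which is the decoupling you were looking for. If you replace the Euler-contraction step by this observation, your outline becomes the paper's proof; as written, the decisive step is missing.
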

In the end of the section we present  F.~Nazarov's examples which show that if we remove the condition $B_{x_{i}x_{j}}\neq 0$ in the  Theorem \ref{youngth} then the conclusion of the theorem does not hold. It is also worth mentioning that in the classical case when $n=3$ and $k=2$ we obtain that under the assumptions $L1-L5$ and $B_{y_{i}y_{j}}\neq 0$ there are {\em only} Young's inequalities  for convolution. 

\begin{proof}
Equality (\ref{osn-ner}) is the same as 
\begin{align}\label{osn-ner-simple}
\sum_{j=1}^{n} y_{j}a_{js}B_{y_{\ell} y_{j}} \frac{\langle Ca_{\ell},a_{j}\rangle}{\langle Ca_{j},a_{j}\rangle}=0, \quad \forall \ell=1,\ldots, n, \quad \forall s=1,\ldots, k.
\end{align}

We introduce a vector function $P(\s{x})=(p^{1}(\s{x}),\ldots,p^{n}(\s{x}))$, where $\s{x} \in \mathbb{R}^{n}$, such that 
$$
P(\ln y_{1},\ldots, \ln y_{n})=\nabla B(\s{y}).
$$ 
Then equality (\ref{osn-ner-simple}), the fact that $B_{y_{i}y_{j}}=B_{y_{j}y_{i}}$ and homogeneity of $B$ combined imply the following 
\begin{align}
&\langle \nabla p^{\ell}, w_{\ell s}\rangle =0, \quad \forall \ell=1,\ldots, n, \quad \forall s=1,\ldots, k; \label{grad}\\
&e^{-x_{j}}p^{i}_{x_{j}}=e^{-x_{i}}p^{j}_{x_{i}}, \quad i,j=1,\ldots, n; \label{smooth}\\
&\mathrm{div} \,p^{\ell}=(k-1)p^{\ell}, \quad \ell=1,\ldots,n. \label{homo}
\end{align}
where 
\begin{align}\label{vectors}
w_{\ell s} = \left(a_{1s}\frac{\langle Ca_{\ell},a_{1}\rangle}{\langle Ca_{1},a_{1}\rangle}, \ldots, a_{ns}\frac{\langle Ca_{\ell},a_{n}\rangle}{\langle Ca_{n},a_{n}\rangle}\right), \quad \forall \ell=1,\ldots, n, \quad \forall s=1,\ldots, k. 
\end{align}
Now we show that the assumptions $B_{y_{i}y_{j}} \neq 0$ imply that $\langle Ca_{i},a_{j}\rangle \neq 0$ for all $i,j=1,\ldots, n$.

Indeed, suppose that $\langle Ca_{i_{0}},a_{j_{0}}\rangle =0$ for some $i_{0}$ and $j_{0}$. Assumption $L3$ implies that $i_{0} \neq j_{0}$. Since $C$ is symmetric we get that $\langle Ca_{j_{0}},a_{i_{0}}\rangle =0$. Corollary \ref{rankest}  says now that $\mathrm{rank}(A^{*}CA\bullet \mathrm{Hess}\,B(\s{y}))\leq 1$. So the determinant of any $2\times2$ submatrix of $A^{*}CA\bullet \mathrm{Hess}\,B=\left\{ \langle Ca_{i},a_{j}\rangle B_{y_{i}y_{j}}\right\}_{ij}$  (2-minor) is zero. Consider $2\times 2$ submatrix of $A^{*}CA\bullet \mathrm{Hess}\,B$ with the following entries: $(i_{0},i_{0}),(i_{0},j_{0}), (j_{0},i_{0})$ and $(j_{0},j_{0})$. Since its determinant is zero and we assumed $\langle Ca_{i_{0}},a_{j_{0}}\rangle =0$, we get that $\langle C a_{i_{0}}, a_{i_{0}}\rangle \langle C a_{j_{0}}, a_{j_{0}}\rangle =0$. This contradicts to our assumption $L3$. 

Thus we obtain that for each fixed $\ell$ the vectors $w_{\ell s}$, $s=1,\ldots, n,$ span $k=n-1$ dimensional subspace $W_{\ell}$. Therefore, equality (\ref{grad}) implies that $\nabla p^{\ell}(\s{x})=\lambda^{\ell}(\s{x}) v^{\ell}$ where $\lambda^{\ell}(\s{x})$ is a nonvanishing  scalar valued function in $\mathrm{int}(\mathbb{R}^{n}_{+})$,  $v^{\ell} \perp W_{\ell}$ and none of the components of $v^{\ell}$ is zero. 

The  equality (\ref{homo}) implies that we can choose $v^{\ell}$ so that $\langle v^{\ell}, \s{1}\rangle=k-1$ (here $\s{1}=(1,\ldots, 1)\in \mathbb{R}^{n}$) and so that  $\lambda^{\ell}(\s{x})=p^{\ell}(\s{x})$ for all $\ell=1,\ldots, n.$

Hence the equation $\nabla p^{\ell}(\s{x})=p^{\ell}(\s{x})v^{\ell}$ easily implies that 
$p^{\ell}(\s{x})=e^{\langle v^{\ell}, \s{x} \rangle }p^{\ell}(\s{0})$ for all $\ell$. The equalities (\ref{smooth}) imply that 
\begin{align*}
v^{\ell}=(q_{1}, \ldots, q_{\ell-1}, q_{\ell}-1, q_{\ell+1},\ldots, q_{n}), \quad \forall \ell=1,\ldots. n.
\end{align*}
where $\s{q}=(q_{1},\ldots, q_{n}) \in \mathbb{R}^{n}$. It also follows that $P(\s{0})=k\s{q}$ for some number $k\neq 0$. Thus we get that $B_{y_{\ell}} = kq_{\ell} y_{1}^{q_{1}}\cdots y_{n}^{q_{n}}/y_{\ell}$ and this proves Theorem \ref{youngth}.  
\end{proof}

\subsubsection{Example of necessity of the assumption $B_{y_{i}y_{j}}\neq 0$ in Theorem \ref{youngth}}
 
Let $n=3,\, k=2$ and $B(x_{1},x_{2},x_{3})=\varphi(x_{1},x_{2})x_{3}$ where $\varphi \in C^{2}(\mathrm{int}\,\mathbb{R}^{2}_{+})\cap C(\mathbb{R}^{2}_{+})$ is an arbitrary concave function and  homogeneous of degree 1. Let 
\begin{align*}
&A=   \left( {\begin{array}{ccc}
             0 & 0 &1/\sqrt{2}   \\
             1  & 1  &  0             
                \end{array} } \right),            
&C=
            \left( {\begin{array}{cc}
             2  & 0  \\
             0  & 1               
                \end{array} } \right). 
\end{align*}
Then 
\begin{align*}
A^{*}CA=
            \left( {\begin{array}{ccc}
             1  & 1  &  0  \\
             1  & 1  &  0  \\
             0  & 0  &  1               
                \end{array} } \right).
\end{align*}
Since  $\varphi$ satisfies homogeneous Monge--Amp\`ere equation we have $A^{*}CA\bullet \mathrm{Hess}\,B \leq 0$. Clearly all the assumptions $L1-L5$ are satisfied.

\subsubsection{Theorem \ref{youngth} does not hold in the case $1<k<n-1$}
It turns out that even if $B_{y_{i}y_{j}}\neq 0$ and $1<k<n-1$ then it is not necessarily true that a function $B$ which satisfies assumptions $L1-L5$ has a form $B=My_{1}^{\alpha_{1}}\cdots y_{n}^{\alpha_{n}}$. This means that Theorem \ref{youngth} cannot be improved. We give an example in a general case. 

Assume that $1<k<n-1$ and $n >3$  (case $n=3$ was already discussed above). Take arbitrary nonnegative  $\varphi \in C^{2}(\mathrm{int}(\mathbb{R}^{2}_{+}))\cap C(\mathbb{R}^{2}_{+}))$ so that $\varphi$ is a concave function and homogeneous of degree one. 
We choose $\varphi$ so that it has nonzero second derivatives. We consider the following function 
\begin{align}
\label{n-2}
B(\s{y})=y_{1}^{\alpha_{1}}\cdots y_{n-2}^{\alpha_{n-2}}\cdot \varphi(y_{n-1},y_{n}), \quad \s{y} \in \mathbb{R}^{n}_{+}.
\end{align}
Let $a_{n-1}=a_{n}=(0,\ldots, 0,1) \in \mathbb{R}^{k}$ and let $a_{1}=(\tilde{a}_{1},0), \ldots, a_{n-2}=(\tilde{a}_{n-2},0)\in\mathbb{R}^{k}$. 
We choose vectors $\tilde{a}_{1}, \ldots, \tilde{a}_{n-2} \in \mathbb{R}^{k-1}$ in the following way. First of all they span $\mathbb{R}^{k-1}$.
Intersection of the interior of the  convex hull $K$ (described in the Introduction and constructed by the vectors $\tilde{a}_{1}, \ldots, \tilde{a}_{n-2}$) with the hyperplane $\{y_{1}+y_{2}+\cdots +y_{n-2}=k-1\}$ is nonempty.  We choose a point $(\alpha_{1}, \ldots, \alpha_{n-2})$ from this intersection. 

Then there exists  $(k-1)\times (k-1)$ symmetric, positive semidefinite matrix $\tilde{C}$  such that $\tilde{A}^{*}\tilde{C}\tilde{A} \bullet \mathrm{Hess}\,\tilde{B}  \leq 0$ where $\tilde{A}=(\tilde{a}_{1},\ldots, \tilde{a}_{n-2})$ and 
$$
\tilde{B}(y_{1}, \ldots, y_{n-2})=y_{1}^{\alpha_{1}}\cdots y_{n-2}^{\alpha_{n-2}}\ .
$$
Moreover, we have $\langle \tilde{C}\tilde{a}_{j}, \tilde{a}_{j}\rangle >0$ for $j=1,\ldots, n-2$. The existence of such a matrix  $\tilde C$ follows from the solution of the Euler--Lagrange equation for the right side of \eqref{mainin} (see \cite{CLM}, Theorem~5.2), see also Subsection \ref{Young} below. It is clear that the function $B$ satisfies all properties $L1-L5$ except one has to check the property $L3$.  We choose $C$ as follows 
\begin{align*}
C=
            \left( {\begin{array}{cc}
             \tilde{C}  & \s{0}  \\
             \s{0}^{T}  & 1               
                \end{array} } \right). 
\end{align*}

Function $B$ from \eqref{n-2} satisfies $L3$ (and of course it can easily be made to satisfy all other properties $L1-L5$), but it is not a Young function.



\subsection{Case of Young's function}
\label{Young}
In this subsection we consider classical case when $B(\s{y})=y_{1}^{1/p_{1}}\cdots y_{n}^{1/p_{n}}$ where $1\leq p_{j} < \infty$. Assumptions $1\leq p_{j}$ follows from the assumption $L3$ (which implies in particular that the function $B$ is separately concave) and the assumption $p_{j}<\infty$ was made because otherwise we have a function of less variables $m<n$. Note that we also must require that $\sum \frac{1}{p_{j}}=k$. This function satisfies all assumptions of $L1-L5$ except of $L3$. We try to understand for which matrix $A$ and numbers $p_{j}$ there is a matrix $C$ mentioned in the assumption $L3$. The answer on this question was obtained in \cite{CLM}  by using Euler--Lagrange equation. 

 We will obtain equation on the matrix $C$.  

 Note that $\mathrm{Hess}\,B=B\cdot\left \{\frac{1}{p_{i}p_{j}y_{i}y_{j}}\right\}-B\cdot\left \{\frac{\delta_{ij}}{p_{i}y_{i}^{2}}\right\}$ where $\delta_{ij}=1$ if $i=j$, and otherwise it is zero. Therefore equality (\ref{osn-ner}) becomes 
\begin{align*}
A\left\{\frac{y_{i}}{\langle Ca_{i},a_{i}\rangle}\right\}\left[A^{*}CA\bullet \left(B\cdot\left \{\frac{1}{p_{i}p_{j}y_{i}y_{j}}\right\}-B\cdot\left \{\frac{\delta_{ij}}{p_{i}y_{i}^{2}}\right\} \right) \right]=0
\end{align*}
After simplification we obtain
\begin{align}
\label{findC}
A\left\{\frac{1}{p_{i}\langle Ca_{i},a_{i}\rangle}\right\}A^{*}C = I_{k\times k}
\end{align}

Notice that the rank of $A\left\{\frac{1}{\sqrt{p_{i}\langle Ca_{i},a_{i}\rangle}}\right\}$ is $k$ because the rank of $A$ is $k$. Then $k\times k$ matrix $A\left\{\frac{1}{p_{i}\langle Ca_{i},a_{i}\rangle}\right\}A^{*}$is invertible by  Binet--Cauchy formula.
Then we can find $C$ from \eqref{findC} by the following obvious formula
\begin{equation}
\label{C}
C= \left(A\, \,diag\left\{\frac{1}{p_{i}\langle Ca_{i},a_{i}\rangle}\right\}A^{*}\right)^{-1}
\end{equation}
if we can solve the following system of non-linear equations fefining $\langle Ca_j, a_j\rangle$, $j=1,\dots, n$:

\begin{equation}
\label{system}
\langle C a_j, a_j\rangle = \langle \left(A\, \,diag\left\{\frac{1}{p_{i}\langle Ca_{i},a_{i}\rangle}\right\}A^{*}\right)^{-1} a_j, a_j\rangle\ .
\end{equation}

Using the notations 
$$
s_j^2:= \frac1{p_j\langle C a_j, a_j\rangle},\,\, j=1, \dots, n,
$$
we readily transfer \eqref{system} to
\begin{equation}
\label{systems}
\frac1{p_j} = s_j^2\langle \left(A\, \,diag\left\{s_j^2\right\}A^{*}\right)^{-1} a_j, a_j\rangle,\,\, j=1, \dots, n,
\end{equation}
which is precisely equation (3.12) of \cite{CLM}. In  \cite{barthe}, \cite{CLM} it is proved that for $\{\frac1{p_j}\}_{j=1}^n$ in the interior of the convex set $K$ from \cite{barthe}, \cite{CLM} this system \eqref{systems} has a solution. In particular, $C$ as in \eqref{C} does exist.

Notice also, that the Young's functions found by Brascamp--Lieb \cite{brascamp--lieb} and corresponding to the interior of the convex set $K$ from \cite{barthe}, \cite{CLM}, do satisfy all properties $L1-L5$. Only $L3$ is interesting because we need to show that there exists a certain matrix $C$. We just found a certain $C$ in \eqref{C} (when the system \eqref{system} has a solution).  This matrix $C$ will satisfy $L3$ when $B$ is the Young's function $B(\s{y})=y_{1}^{1/p_{1}}\cdots y_{n}^{1/p_{n}}$ where $1<p_{j} < \infty, \sum_{j=1}^n 1/ p_{j}  =k$. In fact, $A^*CA\bullet Hess B(\s{y})\le 0$ for such a $B$ is equivalent to

$$
diag\left\{\frac1{y_jp_j}\right\} A^* CA\, \,diag\left\{\frac1{y_jp_j}\right\} \le diag\left\{\frac{\langle Ca_j, a_j\rangle p_j}{y_j^2p_j^2}\right\}.
$$
This is immediately equivalent to
$$
A^* CA \le diag\{1/s_j^2\}.
$$
But denoting $S=diag \{s_j\}$ we make this inequlity $(AS)^* C (AS)\le I_{n\times n}$, which holds because $(AS)^* C (AS)$ is an orthogonal projection onto the span of the columns of $S(A^*)$.

So, we repeat, that the Young's functions found by Brascamp--Lieb \cite{brascamp--lieb} and corresponding to the interior of the convex set $K$ from \cite{barthe}, \cite{CLM}, do satisfy all properties $L1-L5$.
But it is more interesting that, as we have shown above, in certain situations all functions satisfying $L1-L5$ must be of the form of a Young function found by Brascamp and Lieb.

\end{document}